\newtheorem{theorem}{Theorem}[section]
\newtheorem{lemma}[theorem]{Lemma}
\newtheorem{proposition}[theorem]{Proposition}
\theoremstyle{definition}
\newtheorem{definition}[theorem]{Definition}
\theoremstyle{remark}
\newtheorem{remark}[theorem]{Remark}
\numberwithin{equation}{section}
  \DeclareSymbolFont{cyrletters}{OT2}{wncyr}{m}{n}
  \DeclareMathSymbol{\sha}{\mathalpha}{cyrletters}{"58}
\newcommand{\spec}{\text{Spec} \ }
\newcommand{\inj}{\hookrightarrow}
\newcommand{\Of}{\mathcal{O}}
\newcommand{\N}{\mathbb{N}}
\newcommand{\R}{\mathbb{R}}
\newcommand{\Rp}{\R_{\geq 0}}
\newcommand{\Hh}{\mathbb{H}}
\newcommand{\Ld}{\mathbb{L}}
\newcommand{\n}{\mathfrak{n}}
\newcommand{\m}{\mathfrak{m}}
\newcommand{\Y}{\mathcal{Y}}
\newcommand{\nn}{\mathcal{N}}
\newcommand{\xxx}{\mathbb{X}}
\newcommand{\ii}{\mathcal{I}}
\newcommand{\dR}{\mathrm{dR}}
\newcommand{\Q}{\mathbb{Q}}
\newcommand{\upm}{U^{\pm 1}}
\newcommand{\Z}{\mathbb{Z}}
\newcommand{\C}{\mathbb{C}}
\newcommand{\st}{\text{st}}
\newcommand{\Star}{\mathrm{Star}}
\newcommand{\ruo}{\text{if}}
\newcommand{\gal}{\mathrm{Gal}}
\newcommand{\an}{\mathrm{an}}
\newcommand{\pt}{\mathrm{pt}}
\newcommand{\expo}{\mathrm{exp}}
\begin{document}
\title{Ordinarity of Local Galois Representation Arising from Dwork Motives}

\author{Lie Qian}
\address{Department of Mathematics, Stanford University}
\curraddr{Department of Mathematics, Stanford University}
\email{lqian@stanford.edu}
\thanks{}



\date{January 25th, 2021}

\dedicatory{}

\maketitle

\keywords{}

\section{Introduction}

Let $F$ be a characteristic $0$ local field containing $\zeta_N$ whose residue characteristic equals to $p$ and does not divide $N$. We first introduce the Dwork motives. 

Let $T_{0}=\mathbb{P}^{1}-(\{\infty\}\cup\mu_{N})/\mathbb{Z}[1/N]$ with coordinate $t$ and $Z\subset \mathbb{P}^{N-1}\times T_{0}$ be a projective family defined by the following equation:
$$
X_{1}^{N}+X_{2}^{N}+ \cdots +X_{N}^{N}=NtX_{1}X_{2}\cdots X_{N}
$$

The map $\pi:Z\rightarrow T_{0}$ is a smooth of relative dimension $N-2$. We will write $Z_{s}$ for the fiber of this family at a point $s$. Let $H=\mu_{N}^{N}/\mu_{N}$ and
$$
H_{0}:=\{(\xi_{1},\ldots, \xi_{N})\in\mu_{N}^{N}:\xi_{1}\cdots\xi_{N}=1\}/\mu_{N}\subset H
$$
Over $\mathbb{Z}[1/N, \zeta_{N}]$ there is an $H$ action on $Z$ by:
$$
(\xi_{1},\ldots, \xi_{N})(X_{1},\ldots, X_{N}, t)=(\xi_{1}X_{1},\ldots, \xi_{N}X_{N}, (\xi_{1}\cdots\xi_{N})^{-1}t)
$$
Thus $H_{0}$ acts on every fibre $Z_{s}$, and $H$ acts on $Z_{0}$.

Fix $\chi$ a character $H_{0}\rightarrow\mu_{N}$ of the form:
$$
\chi\ ((\xi_{1},\ldots, \xi_{N}))=\prod_{i=1}^{N}\xi_{i}^{a_{i}}
$$

where $(a_1, \ldots, a_N)$ are $N$ constants such that $\sum_{i=1}^{N}a_{i}\equiv 0$ mod $N$. Therefore the character is well-defined.

We define the Dwork motive to be given by $Z$ and the $\chi$-eigenpart of the $H_0$ group action. In concrete terms, its $p$-adic realization is defined below.

For any prime $\lambda$ of $\mathbb{Z}[1/2N, \zeta_{N}]$ of residue characteristic $p$, we define the lisse sheaf $V_{\lambda}/(T_{0}\times{\rm Spec} \mathbb{Z}[1/2Np, \zeta_{N}])_{et}$ by:
$$
V_{\lambda}=(R^{N-2}\pi_{\ast}\mathbb{Z}[\zeta_{N}]_{\lambda})^{\chi,H_{0}}
$$
here the $\chi, H_0$ in the supscript means the $\chi$-eigenpart of the $H_0$ action.

We let $V_{\lambda, t}$ denote the fibre of the sheaf $V_\lambda$ over $t$  for a $t\in T_0(F')$, where $F'$ is a finite extension of $F$. In other words, viewed as a $G_{F'}$ representation, $V_{\lambda, t}$ is just $H^{N-2}(Z_{\overline{t}}, \Z[\zeta_N]_{\lambda})^{\chi, H_0}$, here $\overline{t}$ is the corresponding geometric point of $t$.

Fix the embedding $\tau:\mathbb{Q}(\zeta_{N})\inj\mathbb{C}$  such that $\tau(\zeta_{N})=e^{2\pi i/N}$. Let $\tilde{\pi}: Y(\mathbb{C})\rightarrow T_{0}(\mathbb{C})$ denote the base change of $\pi$ along $\tau$ and $V_{B}$ be the locally constant sheaf over $T_{0}(\mathbb{C})$:
$$
V_{B}=(R^{N-2}\tilde{\pi}_{\ast}\mathbb{Z}[\zeta_{N}])^{\chi,H_{0}}
$$

By standard comparison results(see e.g. \cite{BLGHT}), $V_\lambda$ and $V_B$ are locally constant and locally free of the same rank over $\Z[\zeta_N]_\lambda$ and $\Z[\zeta_N]$. This rank can be computed by looking at the fibre over $0$. Denote this rank by $n$.

Fix a nonzero base point $s\in T_{0}(\mathbb{C})$. Now we have the monodromy representation:
$$
\rho_{s}:\pi_{1}(T_{0}(\mathbb{C}), s)\rightarrow GL(V_{B,s})
$$
Let $\gamma_{\infty}$ be the loop around $ \infty$ as an element of $\pi_{1}(T_{0}(\mathbb{C}), s)$.

We can now state the main theorem of this paper.

\begin{theorem}
\label{mt}
Under the assumption that for the motives defined by any fixed $\sigma\chi$, where $\sigma\in\gal(\Q(\mu_N)/\Q)$ is arbitrary, $\rho_{s}(\gamma_{\infty})$ {\it has minimal polynomial} $(X-1)^{n}$, i.e. it is maximally unipotent, we have that the $G_{F_0}$ representation $V_{\lambda, t^{-1}}$ is regular ordinary for any finite extension $F_0$  of $F$ and $t\in F_0$ with $v(t)>0$,  where $v$ is the valuation of $F_0$. 
\end{theorem}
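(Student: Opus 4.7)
The plan is to exhibit a semistable model of the Dwork family at $t=\infty$ over a suitable finite extension, compute the associated filtered $(\varphi,N)$-module of $V_{\lambda,t^{-1}}$ via log crystalline cohomology, and then exploit the maximal unipotence hypothesis---transported from Betti to $p$-adic monodromy via comparison isomorphisms---to produce an ordinary filtration. Concretely, since $v(t)>0$ means $t^{-1}$ specializes to the boundary point $\infty$ in the $p$-adic tube of $T_0$, I expect that after a finite ramified base change $F_0\subset F_1$, the pullback of $Z$ to an \'etale neighborhood of $t=\infty$ acquires semistable reduction. An explicit such model should come from a toric resolution adapted to the variable $s=t^{-1/N}$; the $H_0$-action extends to this model, and the components of the special fibre, together with their combinatorics, can be enumerated explicitly.

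Given the semistable model, the weight (Rapoport--Zink--Steenbrink) spectral sequence produces an admissible filtered $(\varphi,N)$-module $D$ associated to $V_{\lambda,t^{-1}}|_{G_{F_1}}$, restricted to the $(\chi,H_0)$-eigenpart. The monodromy operator $N$ on $D$ is identified, via the comparison between the $p$-adic nearby cycles and their topological counterpart, with the logarithm of the unipotent part of $\rho_s(\gamma_\infty)$ on $V_{B,s}^{\chi,H_0}$. The hypothesis that $\rho_s(\gamma_\infty)$ has minimal polynomial $(X-1)^n$ then gives $N^{n-1}\neq 0$, so $N$ is a single Jordan block of size $n$. The monodromy filtration $W_\bullet D$ is therefore a complete flag with one-dimensional graded pieces, and under Fontaine's functor $D\mapsto V_{\st}(D)$ it transports to a $G_{F_1}$-stable complete flag $0\subsetneq V_1\subsetneq\cdots\subsetneq V_n=V_{\lambda,t^{-1}}$.

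To deduce regular ordinarity over $G_{F_0}$, I would argue that the monodromy filtration is intrinsic (canonically defined from the $(\varphi,N)$-module), hence automatically $\gal(F_1/F_0)$-equivariant, so it descends to a $G_{F_0}$-stable filtration of $V_{\lambda,t^{-1}}$. The weight-monodromy compatibility in the semistable case forces the $p$-adic Newton polygon to equal the Hodge polygon, making the representation ordinary with Hodge--Tate weights being $n$ distinct consecutive integers (up to a global shift determined by the weight of the motive). Each graded character is then an unramified twist of a power of the cyclotomic character, which is the definition of regular ordinary. The hypothesis on all Galois conjugates $\sigma\chi$ is needed because a prime $\lambda$ of $\Z[\zeta_N]$ above $p$ is stabilized by only a subgroup of $\gal(\Q(\mu_N)/\Q)$, so the $\lambda$-adic realization mixes several $\chi$-eigenparts, each of which must have maximally unipotent topological monodromy.

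The hardest step is the combined geometric and cohomological work: constructing the semistable model explicitly, identifying the $(\chi,H_0)$-eigenpart of its log crystalline cohomology, and matching the resulting monodromy operator $N$ with the logarithm of $\rho_s(\gamma_\infty)$ via the appropriate comparison. The toric combinatorics are tractable in principle, but carrying the eigenpart decomposition through the weight spectral sequence, verifying weak admissibility, and bookkeeping the Hodge--Tate weights along the ordinary filtration will demand careful case analysis.
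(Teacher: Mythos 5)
The high-level plan is close to the paper's: construct a semistable model near $t^{-1}=\infty$ after a ramified extension, compute the filtered $(\varphi,N)$-module of the $\chi$-eigenpart via log crystalline cohomology, and feed maximal unipotence of the topological monodromy into the $p$-adic monodromy operator $N$. However, there is a genuine gap in the central step, and you flag it yourself without resolving it.

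You assert that ``the monodromy operator $N$ on $D$ is identified, via the comparison between the $p$-adic nearby cycles and their topological counterpart, with the logarithm of the unipotent part of $\rho_s(\gamma_\infty)$.'' No such direct comparison exists: the $p$-adic operator $N$ lives on $D_{\st}$ of a $G_{F'}$-representation at a place over $p$, while $\rho_s(\gamma_\infty)$ is the complex monodromy at the archimedean place around $t^{-1}=\infty$; one cannot compare $p$-adic and Betti nearby cycles of a single variety at a single place. The paper's mechanism for bridging this is precisely the point you are missing, and it drives the entire architecture of Sections 2 and 3: the semistable model $\Y/\Of_{F'}$ is constructed not in isolation but as the base change $S\mapsto\pi^{1/e}$ of an $H_0$-equivariant family $Z$ over $\spec W[S, U^{\pm 1}]'$ obtained by toroidal subdivision, and from $Z$ one extracts a log smooth family $(Y,N)$ over $(W[T]', \N, 1\mapsto T)$. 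One then (a) identifies Hyodo--Kato's $N$ on log crystalline cohomology of the special fiber $\overline{Y_1}$ with the connecting map coming from the log de Rham complex of the characteristic-zero fiber $\overline{Y}/(W,\N,1\mapsto 0)$ (Theorem~\ref{criscp}, proved by a chain of PD envelope / base change computations), (b) passes to $\C$ and uses GAGA (Theorem~\ref{injty}), and (c) invokes Illusie's identification of this connecting map with the residue of the Gauss--Manin connection, which is $-\frac{1}{2\pi i}\log T_s$ with $T_s=\rho_s(\gamma_\infty)^{de}$. Without a family interpolating between $\Of_{F'}$ and $\C$, there is nothing to compare. Your ``toric resolution adapted to $s=t^{-1/N}$'' gestures toward a similar construction, but as stated it is a resolution over $\Of_{F_1}$, not over a $W[T]'$-type base, so it does not supply the bridge.

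Two secondary points. First, your explanation of why all Galois translates $\sigma\chi$ are needed is not quite right: $V_\lambda$ is by definition the single $\chi$-eigenpart, not a mix; the $\sigma$'s enter because $D_{\st}$ is indexed by embeddings $\sigma:(F')^0\inj\overline{\Q(\zeta_N)}_\lambda$, and under the identification with de Rham cohomology of $\overline{Y}_\C$ the $\chi$-eigenpart gets relabeled as the $\tau\sigma^{-1}\chi$-eigenpart as $\sigma$ varies. Second, your descent argument via the intrinsicality of the monodromy filtration on $D_{\st}$ is a legitimate and arguably slicker alternative to the paper's Lemma (which descends via Hodge--Tate weight comparisons), but you still need to check that the graded characters satisfy the regular-ordinary shape over $F_0$, not just that the flag is stable; and your appeal to ``weight-monodromy forces Newton = Hodge'' to get ordinarity is not the right statement — the correct assertion is that a weakly admissible $(\varphi,N)$-module with $N$ a single Jordan block gives a regular ordinary representation, which the paper gets directly from Lemma~2.2(2) of \cite{BLGHT}.
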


\begin{remark}
Note that by the argument in Page 12-13 of the author's forthcoming work \textit{Potential Automorphy for $GL_n$}, the assumption that $\rho_{s}(\gamma_{\infty})$ has minimal polynomial $(X-1)^{n}$ is satisfied when $0\in \{a_1,\ldots, a_N\}$.
\end{remark}

The main application of our theorems would be in the proof of a potential automorphy theorem claimed in a forthcoming paper by the author. There we choose certain $(a_1, \ldots, a_N)$ so that the input condition of main theorem is satisfied. In that paper we need certain local Galois representation to be regular and ordinary in order to apply the automorphy lifting theorem from \cite{tap}.

We shall briefly discuss the idea of the proof of the theorem. Since we may replace $F$ by $F_0$ at the very beginning, we assume without loss of generality that $t$ is an $F$ point $T_0$ from now on.

The theorem will be implied by the following two theorems this paper is going to establish, each possibly interesting in its own right. 

The first theorem claims the existence of a  semistable model for $Z_t$ with $t$ of negative valuation over a finite extension $F'/F$. The semistable model comes from a series of blowup of the naive integral model given by the equation $t^{-1}(X_1^N+X_2^N+\cdots+X_N^N)=NX_1X_2\cdots X_N$ in  $\mathbb{P}^{N-1}\times \Of_{F'}$ for some extension $F'/F$. We first work with the fundamental case where $t$ is of (normalized) valuation $-1$. Then we use the technique of toroidal embedding (in a mixed charateristic setting) established by Mumford to go from this case to the general case. This is the main result of section 2. 

The second theroem gives a way to compute the log cristalline cohomology 
\[
H^{\ast}_{crys}((\overline{Y_1}, \overline{N_1})/(W(k), \N, 1\mapsto 0))
\]
(See section 3 for a brief recall of the notation concerning log geometry) with its $N$ operator in terms of log de-Rham cohomology under the setting that $(\overline{Y_1}, \overline{N_1})$ is log smooth over $(k, \N, 1\mapsto 0)$ and can be lifted into a family with log structure $(Y, N)/(W(k)[T], \N, 1\mapsto T)$. Indeed, we use the log de Rham cohomology of the characteristic $0$ lift given by the fibre over $0$ of $(Y, N)$ and the operator $N$ is given as the connecting homomorphism of certain long exact sequence that is similar to the one used to define the Gauss-Manin connection. This is the first main result of section 3. 

From now on, when there are no ambiguity of the residue field $k$, we will use $W_n$ to denote $W_n(k)$ and $W$ to denote $W(k)$.

We will now explain these 2 results in more detail. 

\subsection{Semistable Model}

Let us recall what we mean by semistable throughout this paper, following \cite{HK}. 

\begin{definition}
\label{smst}
We say a scheme $X$ over a discrete valuation ring $A$ is with semistable reduction if etale locally on $X$, there is a smooth morphism $X\rightarrow A[T_1, \ldots, T_r]/(T_1\cdots T_r-\pi)$ for some $r\geq 0$, where $\pi$ is a uniformizer of the ring $A$.
\end{definition}

\begin{theorem}
\label{ssbp}
For any $t\in F$ such that $v(t)=d>0$,  there exists a totally ramified extension $F'/F$ generated by $\pi^{1/e}$, a choice of $e$-th root of $\pi$ which is a uniformizer of $\Of_F$ (thus $F'$ is purely ramified of degree $e$ over $F$ with uniformizer $\pi^{1/e}$), and a semistable model $\Y$ over $\Of_{F'}$ of $Z_{t^{-1}}$  with compatible $H_0$ action. 

\end{theorem}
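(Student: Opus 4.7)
The plan is to start with the naive integral model $\mathcal{Z}_0 \subset \mathbb{P}^{N-1}_{\Of_F}$ of $Z_{t^{-1}}$, cut out by $t(X_1^N+\cdots+X_N^N)=NX_1\cdots X_N$ (integral because $v(t)>0$); its special fiber is the union $\bigcup_{i=1}^N H_i$ of the $N$ coordinate hyperplanes, since $p\nmid N$. A Jacobian computation will show that the singular locus of $\mathcal{Z}_0$ is contained in the strata $H_I:=\bigcap_{i\in I}H_i$ with $|I|\geq 2$. I would then resolve by an iterated $H_0$-equivariant sequence of blow-ups along the strict transforms of the $H_I$, performed in order of increasing dimension of $H_I$ (smallest stratum first); $H_0$-equivariance is automatic since $H_0$ preserves each $H_i$ set-theoretically. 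In an affine chart (e.g.\ $X_N=1$) the defining equation near $H_I$ takes monomial form $t\cdot f = c\cdot\prod_{i\in I}X_i\cdot g$ with $f,g$ units, and the key local check will be that after each blow-up the total space is regular and the special fiber is a reduced normal crossings divisor, as required by Definition~\ref{smst}.

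\textbf{Step 2 (General $d$, via toroidal embeddings).} Since the blow-ups of Step 1 depend only on the combinatorics of the coordinate hyperplanes, I would re-run the same sequence in the relative setting, replacing $t$ by a formal parameter $s$ and working over $\spec\Of_F[[s]]$. This should produce a regular scheme $\widetilde{\mathcal{Y}}\to\spec\Of_F[[s]]$ whose local equations at singular strata of $\{s\pi=0\}$ are of toroidal form $y_1\cdots y_r = s\cdot\pi^a$ with $a\in\{0,1\}$. Specializing at $s=t=\pi^d u$ then gives a scheme over $\Of_F$ with toric singularities of the form $y_1\cdots y_r = \pi^{d+a}\cdot\text{unit}$. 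The technique of toroidal embeddings in mixed characteristic due to Mumford will resolve each such toric singularity after passing to a totally ramified extension $F'=F(\pi^{1/e})$, with $e$ depending only on $d$ and on the combinatorics fixed in Step 1, and performing a subdivision of the associated fan; the $H_0$-action permutes the toroidal strata and preserves the fan, so the subdivision can be chosen $H_0$-equivariantly, yielding the required semistable model $\Y$ over $\Of_{F'}$.

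\textbf{Main obstacle.} The hardest part will be Step 1: the local monomial bookkeeping as one iterates the blow-ups of the strata $H_I$ inside the singular hypersurface $\mathcal{Z}_0$. One must show, chart by chart in each affine patch of each blow-up, that the exponents in the equation $t\cdot f = \prod_{i\in I}X_i\cdot g$ evolve so that no new singularities appear and the special fiber remains a reduced normal crossings divisor throughout; this combinatorial verification is what should occupy most of Section~2. Step 2 will then be a reasonably formal application of Mumford's toroidal machinery, once the toroidal structure produced in Step 1 is made explicit enough for the ramified base change and the fan subdivision to be carried out $H_0$-equivariantly.
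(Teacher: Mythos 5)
There is a genuine gap in Step~1: the proposed blow-up centers are not the right ones. You correctly observe that the singular locus of $\mathcal{Z}_0$ lies in the strata $H_I$ with $|I|\geq 2$, but you omit the additional constraint that it also lies on the Fermat hypersurface $D:=\{X_1^N+\cdots+X_N^N=0\}$. A Jacobian computation in the chart $X_N=1$ shows that a closed point of the special fiber with coordinates $(a_1,\ldots,a_{N-1})$ is singular iff $1+\sum_j a_j^N=0$ \emph{and} at least two $a_j$ vanish. Near such a point, say $a_1=a_2=0$ and all other $a_j\neq 0$, the local equation of $\mathcal{Z}_0$ is, after a linear change of coordinates, of conifold type $\pi z = x_1 x_2 + (\text{higher order})$, where $z$ is a local coordinate transverse to $D$. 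Blowing up the ideal $(x_1,x_2)$ of your center $H_{\{1,2\}}$ does \emph{not} resolve this: in the chart $x_2 = x_1 s$ the strict transform becomes $\pi z = x_1^2 s + \cdots$, which is still singular along $\pi = z = x_1 = 0$. What does resolve it is blowing up a Weil divisor on the conifold such as the ideal $(z, x_1)$, geometrically the intersection $D\cap D_1$ of the Fermat divisor with a coordinate divisor. This is exactly what the paper does (the construction is attributed to Shepherd-Barron): one blows up $D\cap D_N$, then iteratively the strict transforms $D^{(j)}\cap D_j^{(j)}$ for $j=N-1,\ldots,1$. The Fermat divisor $D$ is an essential component of every blow-up center and cannot be replaced by coordinate hyperplanes alone.

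A secondary, structural issue: the paper does not carry out the blow-up over $\Of_F$ or $\Of_F[[s]]$ as you propose, but over the larger base $\spec W[T, U^{\pm 1}]'$, keeping both the uniformizer-direction $T$ (later $S$) and the unit $U$ as independent coordinates, and only afterwards specializes $T\mapsto \pi^{1/e}$, $U\mapsto u$. This is not cosmetic. Proposition~\ref{ss} and Remark~\ref{link} are what allow one in Sections~3--4 to replace the special fiber $(\overline{\Y},\overline{\nn})$ by a log scheme $(\overline{Y_1},\overline{N_1})$ that lifts over $(W[T]',\N,1\mapsto T)$ with $W$ unramified, which is the setting in which the Hyodo--Kato machinery and the de~Rham comparison of Theorem~\ref{criscp} apply. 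Resolving over $\Of_F$ directly would leave you without this unramified lift and would not set up the rest of the paper.
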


Here is a supplement of the above theorem. It gives some idea on how the semistable model $\Y$ is constructed, and will be used as a link to reduce our computation to the situation over the unramified base ring $W$, where Hyodo-Kato's log geometry technique could be applied.

Let $\spec W[S, \upm]'$ denote the open subscheme of $\spec W[S, \upm]$ defined by $(S^{de}U)^N\neq 1$ and let $W[S, \upm]'$ denote the ring of regular function of this scheme.

\begin{proposition}
\label{ss}
In the setting of the above theorem, we can actually find a variety $Z$ with $H_0$ action over $\spec W[S, U^{\pm 1}]'$  that is a blowup of the variety $US^{de}(X_1^N+X_2^N+\cdots+X_N^N)=NX_1X_2\cdots X_N$ over $\spec W[S, U^{\pm 1}]'$  (the latter is the base change of the previous $UT(X_1^N+X_2^N+\cdots+X_N^N)=NX_1X_2\cdots X_N$ along $W[T, U^{\pm 1}]\rightarrow W[S, U^{\pm 1}]'$, $T\mapsto S^{de}$) that is an isomorphism outside the closed subscheme defined by $S$, such that locally $Z$ admits an etale map to 
\begin{equation}
W[U^{\pm 1}, S, Z_1, \ldots, Z_r, Z_{r+1}^{\pm 1}, \ldots, Z_{n}^{\pm 1}]/(US-Z_1\cdots Z_r)
\end{equation} 
over $ W[S, \upm]'$

Choosing a uniformizer $\pi$ of $\Of_F$ and write $t=u\pi^d$ with $u\in \Of_F^\times$, the $\Y$ in the above theorem is obtained from base change of $Z$ along $W[S, U^{\pm 1}]'\rightarrow \Of_{F'}$, $S\mapsto \pi^{1/e}, U\mapsto u$.

\end{proposition}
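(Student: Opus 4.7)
My plan is to construct $Z$ in two stages. First, I would carry out an explicit $H_0$-equivariant resolution of the naive family $UT(X_1^N+\cdots+X_N^N) = NX_1\cdots X_N$ over the unramified base $W[T, \upm]$ to obtain an intermediate ``semistable in $T$'' model $\tilde Z$; then, after the base change $T \mapsto S^{de}$, I would use Mumford's toroidal embedding technique to further modify and produce the desired $Z$ over $W[S, \upm]'$ with local form $US = Z_1\cdots Z_r$.

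For the first stage, I work on the affine chart $X_N = 1$ with $y_i = X_i/X_N$, where the equation reads $UT\cdot g(y) = N y_1\cdots y_{N-1}$ with $g(y) = 1 + y_1^N + \cdots + y_{N-1}^N$. At a point where $g$ is a unit and exactly $y_{i_1},\ldots, y_{i_r}$ vanish, dividing through by the units $g$ and $\prod_{j\notin\{i_k\}} y_j$ puts the equation in the semistable local form $UT = (\mathrm{unit})\cdot y_{i_1}\cdots y_{i_r}$, so the only trouble is where $\{g = 0\}$ meets a multiple intersection of coordinate hyperplanes. I would resolve by a sequence of $H_0$-invariant blowups along these intersection loci, deepest first; a local calculation should show each blowup either separates $\{g=0\}$ from the hyperplane intersection on its exceptional divisor or strictly reduces the number of hyperplanes involved, so the process terminates in a $\tilde Z$ whose every \'etale chart has the form $W[\upm, T, Z_1, \ldots]/(UT - Z_1\cdots Z_r)$. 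Since the blown up centres are defined by symmetric conditions on the $X_i$'s, the $H_0$ action lifts at every step.

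In the second stage, pulling $\tilde Z$ back along $T \mapsto S^{de}$ produces local charts of the form $W[\upm, S, Z_1, \ldots]/(US^{de} - Z_1\cdots Z_r)$, each of which is an affine toric variety with torus-invariant divisor $\{S = 0\}$; Mumford's theory then provides a regular subdivision of the associated fan turning each such equation into $US = Z'_1\cdots Z'_s$. The main obstacle is to ensure that these local subdivisions glue across overlaps and commute with the $H_0$-action; I would handle this by fixing a canonical combinatorial recipe for the subdivision depending only on the pair $(r, de)$, so the resolutions on overlapping \'etale charts automatically agree and the global $H_0$ symmetry is preserved. The resulting $Z$ is an isomorphism away from $\{S = 0\}$ because every blowup centre lies in the special fibre, and the specialisation along $S\mapsto \pi^{1/e}$, $U\mapsto u$ recovers the semistable model $\Y$ of Theorem \ref{ssbp}.
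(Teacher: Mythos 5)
Your two-stage plan has the right overall shape (match the paper: first an explicit blowup over $W[T, \upm]'$, then a toroidal subdivision after the ramified base change), but both stages as you have described them contain genuine gaps.

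\textbf{Stage one.} The paper does not blow up the loci $\{g=0\}\cap\{\text{several }y_j=0\}$ ``deepest first''; it blows up a specific sequence of codimension-$2$ centres $D\cap D_N$, $D^{(N-1)}\cap D_{N-1}^{(N-1)}$, \ldots, $D^{(1)}\cap D_1^{(1)}$ (where $D$ is $\{X_1^N+\cdots+X_N^N=0\}$ and $D_j$ is $\{X_j=0\}$), and then performs a nontrivial Jacobian-criterion computation in the explicit charts $U_k, V_k$ to verify regularity and the local \'etale form over $W[T,\upm]'$. Your termination heuristic (``each blowup either separates $\{g=0\}$ from the hyperplane intersection or strictly reduces the number of hyperplanes involved'') is asserted, not proved, and it is not at all obvious; getting the local models to literally be $(TU-Z_1\cdots Z_r)$ over the base $W[T,\upm]'$, with a consistent choice of structure map for $T$, is precisely the delicate point. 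This needs to be carried out, not posited.

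\textbf{Stage two.} Your proposed mechanism --- subdivide each local chart $US^{de}=Z_1\cdots Z_r$ according to a ``canonical combinatorial recipe depending only on the pair $(r,de)$'' and argue the pieces glue --- is not how Mumford's theory works, and I do not believe it can be made to work as stated. Two charts can overlap along faces in ways that a purely local, $(r,de)$-dependent recipe will not respect; Mumford instead builds a single global conical complex $\Delta$ from the strata of the toroidal embedding and subdivides that one combinatorial object, so gluing is automatic by construction. You also omit the essential combinatorial input: that there \emph{exists} an $e$ for which the compact polyhedral set cut out by $S=1$ admits a subdivision into simplices with vertices in the lattice and volume $1/(d_\alpha)!$ --- that existence is a theorem from \cite{Mum2}, \cite{Mum3}, not a local observation, and it is the entire reason a further ramified extension $F'/F$ appears in Theorem~\ref{ssbp}. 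Finally, the $H_0$-equivariance of the second-stage modification does not follow from ``symmetry of the centres''; in the paper it is established by showing $H_0$ fixes every Cartier divisor in $\widetilde M^{Y_{de}}$ (hence the ideal sheaf defining the blowup), which is a statement about the action on the global complex, not about chart-by-chart recipes.
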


The link to a model over $W$ will be stated in Remark \ref{link}.

\subsection{Log Geometry}

A general log scheme is usually denoted as $(Z, M)$ for a scheme $Z$ and a a sheaf of monoid $M$. However, we would sometime use a third argument after $M$ to show what the structure map $M\rightarrow \Of_Z$ is. Again, we will use $\spec W[T]'$ to denote a fixed choice of affine open subscheme of $\spec W[T]$ such that the closed subscheme defined by $T=0$ is contained in $\spec W[T]'$. Let $\spec W_n[T]''$ be the mod $p^n$ reduction of $\spec W[T]'$.

Let $(Y, N)$ be a log smooth scheme over $(W[T]', \N, 1\mapsto T)$.  Denote by $(Y_n, N_n)$ (resp. $(\overline{Y_n}, \overline{N_n})$) the base change of $(Y, N)$ along $(W_n[T]', \N, 1\mapsto T)\rightarrow (W[T]', \N, 1\mapsto T)$(resp. $(W_n, \N, 1\mapsto 0)\rightarrow (W[T]', \N, 1\mapsto T)$). Let $(\overline{Y}, \overline{N})$ be the base change of $(Y, N)$ along $(W, \N, 1\mapsto 0)\rightarrow (W[T]', \N, 1\mapsto T)$.  Note that the closed immersions in the corresponding fibre diagram are all exact and the projection maps to the base are all log smooth.

Hyodo-Kato define the $i$-th log cristalline cohomology of $(\overline{Y_1}, \overline{N_1})$
as the limit
\begin{equation}
\left(\varprojlim_{n}H^{i}\left(\left((\overline{Y_1}, \overline{N_1})/(W_n,\N, 1\mapsto 0)\right)_{crys},\Of_{\overline{Y_1}/W_n}\right)\right) [\frac{1}{p}]
\end{equation}
and equip it with a nilpotent operator $N$.

\begin{theorem}
\label{criscp}
The above limit is isomorphic to $\Hh^i(\dR_{\overline{Y}/(W, \N, 1\mapsto 0)})[\frac{1}{p}]$  while the operator $N$ is given as the degree $i$ boundary homomorphism of the long exact sequence given by the following exact triangle
    
\begin{tikzcd}
\dR_{\overline{Y}/(W, \N, 1\mapsto 0)}[-1]\arrow[r, "\cdot dlog 1"]
&\dR_{\overline{Y}/(W, (0))}\arrow[r]
&\dR_{\overline{Y}/(W, \N, 1\mapsto 0)}\arrow[r]
&{}
\end{tikzcd}

which is defined at each degree $i$ by taking the $i$-th wedge power  of  locally split exact sequence of locally free sheaves of modules
\[
\begin{tikzcd}
0\arrow[r]&
\Of_{\overline{Y}}\arrow[r, "\cdot d\log 1"]&
\omega^1_{\overline{Y}/(W, (0))}\arrow[r]&
\omega^1_{\overline{Y}/(W, \N, 1\mapsto 0)}\arrow[r]&
0
\end{tikzcd}
\]
given in  Theorem 3.2.3 of \cite{ogus} since we note that $(\overline{Y},\overline{N})$ is log smooth over $(W, \N, 1\mapsto 0)$ and we identify $\omega^1_{(W, \N, 1\mapsto 0)/(W, (0))}\cong W$ by $d\log 1\mapsto 1$.
\end{theorem}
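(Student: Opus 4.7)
The plan is to prove the two assertions---the comparison of cohomology groups and the identification of the monodromy operator---in sequence.

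For the comparison of cohomologies, I would invoke Kato's log crystalline/de Rham comparison theorem. The morphism $(\overline{Y_n}, \overline{N_n}) \to (W_n, \N, 1\mapsto 0)$ is log smooth as a base change of the log smooth $(Y_n, N_n) \to (W_n[T]', \N, 1\mapsto T)$, and the closed immersion $(\overline{Y_1}, \overline{N_1}) \hookrightarrow (\overline{Y_n}, \overline{N_n})$ is exact. Hence $(\overline{Y_n}, \overline{N_n})$ is a log smooth lift of $(\overline{Y_1}, \overline{N_1})$ along the PD thickening $(W_1, \N, 1\mapsto 0) \hookrightarrow (W_n, \N, 1\mapsto 0)$, and Kato's theorem gives
\[
H^i_{crys}\bigl((\overline{Y_1}, \overline{N_1})/(W_n, \N, 1\mapsto 0), \Of\bigr) \cong \Hh^i\bigl(\overline{Y_n}, \omega^\bullet_{\overline{Y_n}/(W_n, \N, 1\mapsto 0)}\bigr).
\]
Taking the inverse limit in $n$ (via standard coherence and Mittag--Leffler arguments) and inverting $p$ then produces $\Hh^i(\dR_{\overline{Y}/(W, \N, 1\mapsto 0)})[1/p]$, as desired.

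For the exact triangle, I apply Theorem 3.2.3 of \cite{ogus} to the tower $(\overline{Y}, \overline{N}) \to (W, \N, 1\mapsto 0) \to (W, (0))$; the log smoothness of the first map makes the relative log cotangent sequence
\[
0 \to \overline{\pi}^{\ast} \omega^1_{(W, \N, 1\mapsto 0)/(W, (0))} \to \omega^1_{\overline{Y}/(W, (0))} \to \omega^1_{\overline{Y}/(W, \N, 1\mapsto 0)} \to 0
\]
short exact and locally split. Under the identification $\omega^1_{(W, \N, 1\mapsto 0)/(W, (0))} \cong W$ via $d\log 1 \mapsto 1$, this becomes the sequence stated in the theorem. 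Taking $i$-th wedge powers yields a two-step filtration on $\omega^i_{\overline{Y}/(W, (0))}$ with quotient $\omega^i_{\overline{Y}/(W, \N, 1\mapsto 0)}$ and subobject $d\log 1 \wedge \omega^{i-1}_{\overline{Y}/(W, \N, 1\mapsto 0)} \cong \omega^{i-1}_{\overline{Y}/(W, \N, 1\mapsto 0)}$; since $d(d\log 1) = 0$, the de Rham differentials respect this filtration, producing the stated distinguished triangle and a boundary homomorphism on hypercohomology.

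The main obstacle is identifying this boundary map with the Hyodo--Kato monodromy operator $N$. I would approach this by computing $N$ from the horizontal log smooth lift $(Y, N) \to (W[T]', \N, 1\mapsto T)$. Applying Theorem 3.2.3 of \cite{ogus} to $(Y, N) \to (W[T]', \N, 1\mapsto T) \to (W, (0))$ yields the analogous exact sequence
\[
0 \to \Of_Y \cdot d\log T \to \omega^1_{Y/(W, (0))} \to \omega^1_{Y/(W[T]', \N, 1\mapsto T)} \to 0,
\]
whose boundary on hypercohomology produces a Gauss--Manin-type connection on $\Hh^i(\dR_{Y/(W[T]', \N)})$ in the $d\log T$ direction. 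By definition, $N$ is the restriction of this connection to the fibre at $T=0$; under specialization $d\log T$ becomes $d\log 1$ and the sequence specializes to the one over $\overline{Y}$, so by functoriality of connecting homomorphisms under base change, $N$ equals the boundary of the exact triangle. A fully rigorous compatibility check can be performed locally, using the charts $W[U^{\pm 1}, S, Z_1, \ldots, Z_r, Z_{r+1}^{\pm 1}, \ldots, Z_n^{\pm 1}]/(US - Z_1\cdots Z_r)$ supplied by Proposition \ref{ss}, where both sides act by contraction against the derivation dual to $d\log 1$.
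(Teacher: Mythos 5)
Your overall architecture --- Kato/Hyodo--Kato comparison to turn log crystalline cohomology into log de Rham over $W_n$, then a limit argument, then Ogus's relative cotangent sequence to produce the exact triangle --- does match the paper's strategy, and the first two pieces are essentially right (though the paper replaces your appeal to ``standard coherence and Mittag--Leffler'' with an explicit diagram of short exact sequences $0 \to \Hh^i/p^n \to \Hh^i(\dR_{\overline{Y_n}}) \to \Hh^{i+1}[p^n] \to 0$ obtained from proper base change, because the $N$-equivariance must itself be tracked through the limit, which a Mittag--Leffler argument alone will not do).

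The genuine gap is in your identification of the boundary map with the Hyodo--Kato operator $N$. You write ``by definition, $N$ is the restriction of this connection to the fibre at $T=0$,'' but that is not the definition: Hyodo--Kato (3.6) defines $N$ via the PD envelope $W_n\langle T\rangle'$ of $W_n[T]'$ along $(t)$ and the associated PD thickening $Y_n\langle\rangle$ of $\overline{Y_1}$. Concretely, their construction produces an exact triangle involving the complex $C_{\overline{Y_1}, Y_n\langle\rangle / (W_n\langle T\rangle', \N, 1\mapsto T)}$, which is \emph{not} the naive de Rham complex of $Y_n$; it is a complex whose terms are $\Of_{Y_n\langle\rangle}\otimes_{\Of_{Y_n}}\omega^{\bullet}$. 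The assertion that this specializes, after $\otimes^{\Ld}_{W_n\langle T\rangle'} W_n$, to the exact triangle of log de Rham complexes over $\overline{Y_n}$ --- and in particular that the middle complex $C_{\overline{Y_1}, Y_n/(W_n,(0))}\otimes^\Ld W_n$ has differentials which are the log de Rham differentials (rather than some PD-twisted variant) --- is precisely the technical content the paper has to establish via its diagram involving three vertical isomorphisms, and it is exactly what your phrase ``by functoriality of connecting homomorphisms under base change'' is silently assuming. So what you state as a definition is actually the theorem.

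A smaller misstep: your final suggestion to verify compatibility locally using the charts of Proposition~\ref{ss} does not fit the statement being proved. Theorem~\ref{criscp} is asserted for an arbitrary $(Y,N)$ log smooth over $(W[T]',\N,1\mapsto T)$, not only for the blowups of Dwork families, so those charts are not available in general (and the paper does not use them here). If you want a concrete intermediate step, the right replacement is the PD envelope calculation: identify $(D, M_D)$ for each of the three closed immersions that appear, then compare the resulting complexes $C_{\overline{Y_1}, -/-}$ against actual log de Rham complexes after reducing mod $t$.
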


\begin{remark}
In the setting of the above theorem, note also that the resulting exact sequence 
\[
\begin{tikzcd}
0\arrow[r]&
\omega^{i-1}_{\overline{Y}/(W, \N, 1\mapsto 0)}\arrow[r, "\cdot d\log 1"]&
\omega^i_{\overline{Y}/(W, (0))}\arrow[r]&
\omega^i_{\overline{Y}/(W, \N, 1\mapsto 0)}\arrow[r]&
0
\end{tikzcd}
\]
at degree $i$ is a locally split exact sequence of locally free sheaves of modules.

\end{remark}

Finally, let $(\overline{Y}_\C, \overline{N}_\C)$ be a proper log scheme smooth over $(\C, \N, 1\mapsto 0)$ and $(\overline{Y^\an}, \overline{N^\an})$ be the analytic log scheme associated to it which is also smooth over the analytic point $(\pt, \N, 1\mapsto 0)$. We would like to reduce the computation to the analytic setting by the following theorem. It will be proved by GAGA.

\begin{theorem}
\label{injty}
There exists an $N$ equivariant isomorphism
\[
\Hh^{i}(\dR_{\overline{Y}_\C/(\C, \N, 1\mapsto 0)})\cong \Hh^{i}(\dR_{\overline{Y^\an}/(\pt, \N, 1\mapsto 0)})
\]
where the $N$ on each hypercohomology is defined by degree $i$ boundary morphism of the exact triangles obtained similar to the one in Theorem \ref{criscp}

\begin{tikzcd}
\dR_{\overline{Y}_\C/(\C, \N, 1\mapsto 0)}[-1]\arrow[r, "\cdot dlog 1"]
&\dR_{\overline{Y}_\C/(\C, (0))}\arrow[r]
&\dR_{\overline{Y}_\C/(\C, \N, 1\mapsto 0)}\arrow[r]
&{}
\end{tikzcd}

\begin{tikzcd}
\dR_{\overline{Y^\an}/(\pt, \N, 1\mapsto 0)}[-1]\arrow[r, "\cdot dlog 1"]
&\dR_{\overline{Y^\an}/(\pt,(0))}\arrow[r]
&\dR_{\overline{Y^\an}/(\pt, \N, 1\mapsto 0)}\arrow[r]
&{}
\end{tikzcd} 

\end{theorem}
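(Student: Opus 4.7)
The plan is to deduce the stated isomorphism from Serre's GAGA, promoted from coherent sheaves to hypercohomology of bounded complexes of coherent sheaves, together with the fact that log differentials are compatible with analytification.

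First I would observe that for each $i$, the sheaves $\omega^i_{\overline{Y}_\C/(\C,\N,1\mapsto 0)}$ and $\omega^i_{\overline{Y}_\C/(\C,(0))}$ are coherent on the proper $\C$-scheme $\overline{Y}_\C$. Since log smoothness is an \'etale-local condition and the log structure on $\overline{Y}_\C$ comes from a chart, the natural map $(\omega^i_{\overline{Y}_\C/(\C,\N,1\mapsto 0)})^{\an}\to\omega^i_{\overline{Y^{\an}}/(\pt,\N,1\mapsto 0)}$ (and its unlogged counterpart) is an isomorphism: this can be checked on a common \'etale chart of the form $\spec\C[T_1,\ldots,T_r,T_{r+1}^{\pm 1},\ldots]/(T_1\cdots T_r)$ coming from Proposition \ref{ss}, where both sides are given by the same explicit free modules on generators $d\log T_j$ and $dT_j$.

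Next, I would invoke GAGA in its hypercohomology form: for a proper $\C$-scheme $X$ and a bounded complex $\mathcal{F}^{\bullet}$ of coherent $\Of_X$-modules, analytification induces an isomorphism $\Hh^i(X,\mathcal{F}^{\bullet})\cong\Hh^i(X^{\an},(\mathcal{F}^{\bullet})^{\an})$. This is the standard extension from the sheaf version via the hypercohomology spectral sequence $E_2^{p,q}=H^p(X,\mathcal{H}^q(\mathcal{F}^{\bullet}))\Rightarrow\Hh^{p+q}(X,\mathcal{F}^{\bullet})$, which is functorial in $\mathcal{F}^{\bullet}$ and maps term by term under analytification by the classical GAGA for the coherent cohomology sheaves $\mathcal{H}^q$. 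Applying this to both complexes $\omega^{\bullet}_{\overline{Y}_\C/(\C,\N,1\mapsto 0)}$ and $\omega^{\bullet}_{\overline{Y}_\C/(\C,(0))}$ gives the desired underlying isomorphism of hypercohomology groups.

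For $N$-equivariance, I would use that analytification is an exact functor on coherent sheaves: applying it to the locally split short exact sequence of the remark immediately following Theorem \ref{criscp} (in its $\C$-version) produces exactly the analogous short exact sequence on $\overline{Y^{\an}}$, term by term. Hence the exact triangles
\[
\dR_{\overline{Y}_\C/(\C,\N,1\mapsto 0)}[-1]\to\dR_{\overline{Y}_\C/(\C,(0))}\to\dR_{\overline{Y}_\C/(\C,\N,1\mapsto 0)}\to
\]
and its analytic counterpart correspond under analytification, so the GAGA isomorphisms on the three terms form a morphism of distinguished triangles in the derived category of abelian sheaves (equivalently, of $\C$-vector spaces after taking global sections). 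Taking the associated long exact sequences yields commutativity of the GAGA isomorphisms with the connecting homomorphisms, which by definition are the $N$ operators.

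The only genuine point to watch is the check that the analytification functor is compatible with the specific presentation of the log differentials and with local splittings of the short exact sequence, so that the boundary maps really match; this should be the main technical chore, but it is a purely local matter handled on charts of the form produced by Proposition \ref{ss}. Everything else reduces to invoking GAGA and functoriality.
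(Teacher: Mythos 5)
Your overall strategy is the same as the paper's: reduce to GAGA together with a termwise comparison of the log de Rham complexes under analytification, then deduce $N$-equivariance from the induced morphism of distinguished triangles. The paper verifies the termwise isomorphism $\lambda^*\omega^i_{(X,M)/(Z,L)}\cong\omega^i_{(X^{\an},M)/(Z^{\an},L)}$ intrinsically, by reducing to $i=1$ and using the presentation of $\omega^1$ as $\Omega^1_{X/Z}\oplus(M\otimes\Of_X)$ modulo the log relations; you propose to check it on the explicit \'etale charts from Proposition \ref{ss}. Either works; the paper's route is slightly cleaner and does not depend on the particular charts.

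However, there is a genuine error in the way you invoke GAGA. You propose to use the second hypercohomology spectral sequence $E_2^{p,q}=H^p(X,\mathcal{H}^q(\mathcal{F}^\bullet))$ and apply GAGA to the cohomology sheaves $\mathcal{H}^q$. This fails for the log de Rham complex: its differentials are only $\C$-linear, not $\Of_X$-linear, so $\mathcal{H}^q(\dR_{\overline{Y}_\C/(\C,\N,1\mapsto 0)})$ is merely a sheaf of $\C$-vector spaces and is \emph{not} a coherent $\Of_{\overline{Y}_\C}$-module. GAGA does not apply to $H^p$ of such a sheaf. The correct spectral sequence to use is the first hypercohomology (Hodge-to-de-Rham) spectral sequence $E_1^{p,q}=H^q(X,\omega^p_{X})\Rightarrow\Hh^{p+q}(\dR)$: each $E_1$-term is honest coherent cohomology of the individual coherent sheaves $\omega^p$, so GAGA gives an isomorphism of $E_1$-pages and hence of abutments, after which the rest of your argument (exactness of $(-)^{\an}$, morphism of triangles, long exact sequences) goes through as you describe.
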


\subsection*{Acknowledgements.}
I would like to first thank Richard Taylor for encouraging me to think about the subject of this paper. I also want to thank him for all the helpful comments on the draft of this paper. I am grateful to Richard Taylor, Brian Conrad, Weibo Fu, Ravi Vakil, Bogdan Zavyalov for many interesting conversation during the preparation of this text.

\section{Existence of Semistable Blowup}
In this section we always assume char$(k)\nmid N$ and $N>1$. Again, we will use $W_n$ to denote $W_n(k)$ and $W$ to denote $W(k)$

We first prove a lemma that basically settles the case where $t$ is a uniformizer in \ref{ssbp} via base change along $W[T, \upm]'\rightarrow \Of_F$, $T\mapsto t, U\mapsto u$, where $\spec W[T, \upm]'$ is the open subscheme of $\spec W[T, \upm]$ defined by $(TU)^{N}\neq 1$. The idea of this blowup process comes from a construction of Nick Shepherd-Barron.

\begin{lemma}
\label{blup}
There exists a blowup $\xxx$ of the variety $UT(X_1^N+X_2^N+\cdots+X_N^N)=NX_1X_2\cdots X_N$ over $\spec W[T, \upm]'$ with an action of $H_0$   such that the blowdown map is $H_0$-equivariant and  etale locally  $\xxx$ admits an etale map to 
\[
W[T, \upm]'[Z_1, \ldots, Z_r, Z_{r+1}^{\pm 1}, \ldots, Z_{n}^{\pm 1}]/(TU-Z_1\cdots Z_r)
\]
over $\spec W[T, \upm]'$ 
\end{lemma}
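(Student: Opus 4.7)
The plan is to realize $\mathbb{X}$ as an explicit iterated blowup of the naive model $X = \{UT(X_1^N+\cdots+X_N^N)=NX_1\cdots X_N\}$ along $H_0$-invariant centers, and then to verify the étale-local semistable form chart by chart. First I would locate the singularities of $X$: working in the affine chart $X_N = 1$ with coordinates $x_1, \ldots, x_{N-1}$, the equation reads $UTf = Nx_1\cdots x_{N-1}$ with $f := 1 + \sum_{i<N} x_i^N$, and an elementary Jacobian computation identifies the singular locus with the closed subset $\{T=0,\ f = 0,\ \text{at least two of the }x_i\text{ vanish}\}$. In particular, on the open complement $\{f \neq 0\}$ the partial $\partial/\partial T = Uf$ is a unit, so $X$ is already smooth there; the equation rearranges to $UT = (N/f)\,x_1\cdots x_{N-1}$, and absorbing the unit $N/f$ into one of the vanishing coordinates yields étale-local coordinates $Z_i$ satisfying $UT = Z_1\cdots Z_k$, already matching the target form.

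Next I would resolve the remaining singularities by an iterated blowup along $H_0$-invariant centers of the form $V(T,\ X_{i_1}, \ldots, X_{i_s})$, carried out in order of decreasing codimension (deepest coordinate-intersection stratum first). Each such center is automatically $H_0$-invariant because $H_0$ acts on every $X_i$ by a scalar and fixes $T$ and $U$; hence the blowup $\mathbb{X}$ inherits an $H_0$-action and the blowdown map is equivariant. The tangent cone of $X$ at a singular point, after translating the remaining nonzero $x_i$'s away from the relevant $N$th root of $-1$ that forces $f=0$, can be computed to be a conifold of the form $UT\cdot Y = X_i X_j$ up to a unit, and a toroidal/Shepherd--Barron-style blowup resolves this into components meeting as a normal crossings divisor.

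The final step is to check the form $UT = Z_1\cdots Z_r$ on each chart of $\mathbb{X}$. In a blowup chart the coordinates are explicit ratios of generators of the blown-up ideal; substituting into the equation and dividing out by the appropriate power of the exceptional coordinate produces a relation expressing $UT$ as a product of exactly those local coordinates that cut out the components of the new normal-crossings divisor, times a unit which can be absorbed into one of the $Z_i$'s. The remaining smooth directions (those that are everywhere invertible on that chart) supply the variables $Z_{r+1}, \ldots, Z_n$. The main technical obstacle will be choosing an $H_0$-invariant sequence of centers that globally achieves the semistable form in a controlled way, and tracking in each chart the strict transforms of the divisors $X_i = 0$ and $f = 0$ together with their intersections with the exceptional divisors introduced at each successive step.
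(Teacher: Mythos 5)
Your singular-locus analysis is correct --- in the chart $X_i=1$ the total space is regular away from $\{T=0, f=0\}$ intersected with codimension $\geq 2$ coordinate strata, where $f=1+\sum_{j\neq i}x_j^N$, and the local picture there is a conifold $Tf=x_jx_k$ up to units --- and your overall template (iterate $H_0$-invariant blowups, then verify the semistable chart form by Jacobian computations) matches the paper. But the centers you propose are not the paper's centers, and in fact they do not resolve the singularities as you claim.

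The paper blows up, for $j=N, N-1, \ldots, 1$ in turn, the strict transform $D^{(j)}\cap D_j^{(j)}$ where $D=\{X_1^N+\cdots+X_N^N=0\}$ and $D_j=\{X_j=0\}$; the divisor $\{f=0\}$, not $\{T=0\}$, is the one appearing in every center. You instead propose $V(T, X_{i_1},\ldots,X_{i_s})$ in decreasing codimension. Beyond the lesser issue that the deepest such strata entirely miss the singular locus (all $x_j=0$ forces $f=1\neq 0$), there is a concrete failure: near a conifold $Tf=x_1x_2\cdot\mathrm{unit}$, the blowup of $V(T,x_1,x_2)$ resolves the chart where $T$ generates the exceptional, but in the $x_1$-chart (with $T=x_1\tilde{T}$, $x_2=x_1\tilde{x}_2$) the strict transform $U\tilde{T}f - Nx_1\tilde{x}_2\prod_{j>2}x_j=0$ has every partial derivative vanishing along $\{\tilde{T}=x_1=\tilde{x}_2=0,\ f=0\}$ (since $N>1$ kills the $x_1^{N-1}$ contributions at $x_1=0$). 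So a conifold survives. Further blowups might repair this, but you would have to specify the precise order and redo the chart verification, which you concede you have not done --- and that explicit verification (the charts $U_k$, $V_k$ and the Jacobian computations in cases (1)--(3)) is the bulk of the paper's proof and the real content of the lemma.

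There is a second subtlety you skip over even on the locus $\{f\neq 0\}$: the required étale map must be a morphism of $W[T, U^{\pm 1}]'$-schemes, so $T$ must be expressible consistently in the target coordinates. After the blowups this is delicate; in the paper's chart $U_k$ one has $T=NU^{-1}b_k'\prod_{m<k,\ m\neq i}x_m$, so killing a variable $x_j$ with $j<k$ by "absorbing a unit" naively breaks the structure map, and case (3) of the proof introduces the auxiliary coordinate $c_j=b_k'x_j$ precisely to work around this. Any resolution along your lines would face the analogous obstruction, and your sketch does not address it.
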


\begin{proof}

The sequence of blowup is as the following. Initially we have divisors $D_i$ defined by $X_i$ and $D$ defined by $X_1^N+X_2^N+\cdots+X_N^N$. Denote the original variety as $Y_N$, we blowup along $D\cap D_N$ to get a variety $Y_{N-1}$, as well as the divisors $D_i^{(N-1)}$ and $D^{(N-1)}$ as the strict transform of $D_i$ and $D$ respectively. In general, by induction we will have the variety $Y_j$ at the $(N-j)$-th stage, with divisors $D_i^{(j)}$ and $D^{(j)}$ as the strict transform of $D_i^{(j+1)}$ and $D^{(j+1)}$, $\forall i\in \{1,\ldots, n\}$, then we blowup along $D^{(j)}\cap D_j^{(j)}$, we get the variety $Y_{j-1}$ with the divisors $D_i^{(j-1)}$ and $D^{(j-1)}$. The final step is that we let $\xxx:=Y_0$ be the blowup of $Y_1$ along $D^{(1)}\cap D_1^{(1)}$. Note that for each blowup $Y_{j-1}\rightarrow Y_j$, we can associate an $H_0$ action on $Y_{j-1}$ such that the morphism is $H_0$-equivariant because we may see by induction that the locus $D^{(j)}\cap D_j^{(j)}$ we are blowing up along is $H_0$-stable.

We check the desired local property of $Y_0$ by hand. Fix an $i\in \{1,\ldots, N\}$ from now on and we will only work with the preimage of the affine chart $ X_i\neq 0 $ and we will still use $Y_k$ to denote the primage of this chart in the original $Y_k$. In each step of the blowups, we might write the blowup as the union of two affine open subschemes given by the complement of the strict transform of the two divisors whose intersection we are blowing up along. And note that the affine open given by the complement of $D^{(j)}$ will be isomorphic to its preimage under all further blowup, since it has empty intersection with the locus we are blowing up along. 

More precisely, let $l$ be the function defined on $\{1,\ldots, \hat{i}, \ldots, N\}$ by the rule $l(x)=x+1$ if $x\neq i-1$ while $l(i-1)=i+1$. For $0<k\leq N$ with $k\neq i$, let $U_k$ denote the variety
\[
\spec (W[T,\upm]'[x_1,\ldots,\hat{x_i},\ldots, x_N, b_k', b_{l(k)}]/(Nb_k'\prod_{1\leq j\leq k-1, j\neq i}x_j-UT,
\]
\[
b_k'b_{l(k)}-x_k, b_{l(k)}\prod_{l(k)\leq j\leq N, j\neq i}x_j-\sum_{1\leq j\leq N, j\neq i}x_j^N-1)
\]
here $x_j$ are the affine coordinates $\frac{X_j}{X_i}$, with the understanding that when $k=N$, the product $\prod_{l(k)\leq j\leq N, j\neq i}x_j=1$, and thus $b_{l(N)}=\sum_{1\leq j\leq N, j\neq i}x_j^N+1$.

And for $0<k<N$ with $k\neq i$, let $V_k$ denote the variety \[
\spec (W[T,\upm]'[x_1,\ldots,\hat{x_i},\ldots, x_N, b_{l(k)}]/(N\prod_{1\leq j\leq k, j\neq i}x_j-UTb_{l(k)},
\]
\[
b_{l(k)}\prod_{l(k)\leq j\leq N, j\neq i}x_j-\sum_{1\leq j\leq N, j\neq i}x_j^N-1)
\]

It can be seen inductively for $k$, the following properties hold.
\begin{itemize}
    \item $Y_k=V_k\cup \cup_{l(k)\leq j\leq N, j\neq i}U_j$ as affine opens for $0\leq k\leq N, k\neq i$, $Y_i=Y_{i+1}$.
    
    \item for any $m\leq k$ $D_m^{(k)}$ is the divisor given by $x_m$ in the affine open $V_k$ and $U_j$ for all $l(k)\leq j\leq N, j\neq i$. 
    
    \item for $m\geq l(k)$, $D_m^{(k)}$ is the divisor given by $b_m'$ in $U_m$ and $x_m$ in $U_j$ for all $j>m $ and it has empty intersection with the rest of the affine opens.
    
    \item $D^{(k)}$ is given by the divisor $b_{l(k)}$ in the affine open $V_k$ only.

\end{itemize}

Thus the blowup is an isomorphism over each $U_k$ in the affine charts of $Y_j$ and maps $V_{j-1}\cup U_j$ to $V_{j}$ in the next step. From this, it suffice to verify all $U_k$ satisfy the local property. Now $U_k$ can be written as
\[
\spec W[ \upm,x_1, \ldots,\hat{x_i},\hat{x_k},\ldots, x_N, b_k', b_{l(k)}]/(
b_{l(k)}\prod_{l(k)\leq j\leq N, j\neq i}x_j-
\]
\[
\sum_{1\leq j\leq N, j\neq i,k}x_j^N-(b_k'b_{l(k)})^N-1)
\]

We first prove $U_k$ is regular by applying Jacobian criterion to this variety. We fix a closed point $q$ of this variety giving rise to a $\overline{k}$-point of the form $$x_1\mapsto a_1,\ldots, x_N \mapsto a_N, b_k'\mapsto u, b_{l(k)}\mapsto v, U\mapsto w,$$
for some constant $a_j, u,v,w\in \overline{k}$. Let $\m$ be the maximal ideal in \linebreak $W[ \upm,x_1, \ldots,\hat{x_i},\hat{x_k},\ldots, x_N, b_k', b_{l(k)}]$. It suffice to check that the defining equation is not $0$ in $\m/\m^2\otimes_{k(\m)}\overline{k}$, which has a basis $dp, dx_1,\ldots, dx_N, db_k', db_{l(k)}, dU$.

Let $f$ be the defining equation
\[
b_{l(k)}\prod_{l(k)\leq j\leq N, j\neq i}x_j-\sum_{1\leq j\leq N, j\neq i,k}x_j^N-(b_k'b_{l(k)})^N-1
\]
of $U_k$. Then $df=\sum_{j\neq i,k}f_jdx_j+f_udb_k'+f_vdb_{l(k)}$ where 
\[
\begin{array}{ll}
    f_j=-Na_j^{N-1}& \ruo \ j<k\\
    f_j=v\prod_{l(k)\leq s\leq N, s\neq i,j}a_s-Na_j^{N-1}& \ruo \ j\geq l(k)\\
    f_u=-Nu^{N-1}v^N&\\
    f_v=-Nu^Nv^{N-1}+\prod_{l(k)\leq s\leq N, s\neq i}a_s&
\end{array}
\]

If all of the coefficients are $0$, we see that $a_j=0$, $\forall j<k$ from $f_j=0$ since $l\nmid N$. From $f_u=0$ we see $u=0$ or $v=0$, either implies by $f_v=0$ that $a_s=0$ for some $l(k)\leq s\leq N, s\neq i$. This implies $a_j=0$, $\forall j\neq i,k,s$ by the $f_j=0$, which in turn implies $a_s=0$ by the $f_s=0$. But substituting the known zeroes into $f$ we see that this can never happen. Thus $U_k$ is regular.

To verify the local property, fix a $U_k$ to work in.

(1) If $f_j\neq 0$ for some $j\geq l(k)$, then we claim that  the map \linebreak $U_k\rightarrow W[\upm, x_1,\ldots,\hat{x_i}, \hat{x_j}, \hat{x_k},\ldots, x_N, b_k', b_{l(k)}]$ given by corresponding coordinate is etale near the point $q$. Let $\n$ denote the maximal ideal corresponding to the image of $q$ under this map. Then the claim follows because 
\[
\begin{split}
    \m/\m^2\otimes_{k(\m)}\overline{k}&\cong\overline{k}dp\oplus \overline{k}dx_1\oplus\cdots\oplus\overline{k}dx_N\oplus \overline{k}db_k'\oplus\overline{k}db_{l(k)}\oplus\overline{k}dU/\\
    &\left(\sum_{j\neq i,k}f_jdx_j+f_udb_k'+f_vdb_{l(k)}\right)\\
    &\cong\overline{k}dp\oplus \overline{k}dx_1\oplus\cdots\oplus \widehat{\overline{k}dx_j}\oplus\overline{k}dx_N\oplus \overline{k}db_k'\oplus\overline{k}db_{l(k)}\oplus\overline{k}dU\\
    &\cong\n/\n^2\otimes_{k(\n)}\overline{k}
\end{split}
\]
here the middle isomorphism follows because $f_j\neq 0$,
and we define the structure map $\spec\left(W[\upm, x_1,\ldots,\hat{x_i}, \hat{x_j}, \hat{x_k},\ldots, x_N, b_k', b_{l(k)}]\right)\rightarrow W[T, \upm]'$ by $T\mapsto NU^{-1}b_k'\prod_{1\leq m\leq k-1, m\neq i}x_m$, so that the morphism is a morphism of $W[T, \upm]'$ scheme.

(2) If $f_v\neq 0$, then same argument as that in (1) gives that the map $U_k\rightarrow W[\upm, x_1,\ldots,\hat{x_i},  \hat{x_k},\ldots, x_N, b_k']$ is etale near at the point $q$. And  again define the structure map by $T\mapsto NU^{-1}b_k'\prod_{1\leq m\leq k-1, m\neq i}x_m$.

Note now that if we try to use similar argument to "kill" the variable $x_j$ for some $j<k$, then it is hard to define a structure map on the target scheme because $x_j$ appears in the expression of $T$ in the original $U_k$. Hence we use the following trick.

(3) If $a_j^N\neq (uv)^N$ for some $j<k$, then we consider the map \linebreak $U_k\rightarrow W[\upm, x_1,\ldots,\hat{x_i}, \hat{x_j}, \hat{x_k},\ldots, x_N, \hat{b_k'}, b_{l(k)}, c_j]$ where every variable maps to the corresponding one in the structure ring of $U_k$ except $c_j\mapsto b_k'x_j$, thus $dc_j\mapsto udx_j+a_jdb_k'$ under the pullback of the map mentioned above. The condition   $a_j^N\neq (uv)^N$ gives that
\[
\begin{pmatrix}

  u   &  a_j \\
  f_j   & f_u

\end{pmatrix}
\]
is nondegenerate, and hence 
\[
\begin{split}
    \m/\m^2\otimes_{k(\m)}\overline{k}&\cong\overline{k}dp\oplus \overline{k}dx_1\oplus\cdots\oplus\overline{k}dx_N\oplus \overline{k}db_k'\oplus\overline{k}db_{l(k)}\oplus\overline{k}dU/\\
    &\left(\sum_{j\neq i,k}f_jdx_j+f_udb_k'+f_vdb_{l(k)}\right)\\
    &\cong\overline{k}dp\oplus \overline{k}dx_1\oplus\cdots\oplus \widehat{\overline{k}dx_j}\oplus\overline{k}dx_N\oplus \widehat{\overline{k}db_k'}\oplus\overline{k}db_{l(k)}\oplus\overline{k}dU\oplus \overline{k}(udx_j+a_jdb_k')\\
    &\cong\n/\n^2\otimes_{k(\n)}\overline{k}
\end{split}
\]
where $\n$ denote the maximal ideal corresoonding to the image of $q$ under the map. And we may take the structure map $W[\upm, x_1,\ldots,\hat{x_i}, \hat{x_j}, \hat{x_k},\ldots, x_N, \hat{b_k'}, b_{l(k)}, c_j]\rightarrow W[T, \upm]'$ to be $T\mapsto NU^{-1}c_j\prod_{1\leq m\leq k-1, m\neq i, j}x_m$. It could be checked that the map is a map of $W[T, \upm]'$ scheme.

We conclude that this exhausts all possibilities if we impose the condition that $q$ lies in the closed subscheme defined by $T$: If all of the three conditions above do not hold, then  $a_j^N=(uv)^N$ for all $j<k$, $f_j=0$ for all $j\geq l(k)$ and $f_v=0$. 

We claim for all $j>l(k)$, $a_j\neq 0$: If $a_j=0$ for some $j\geq l(k)$, then checking the condition $f_{j'}=0$ for all $j'\geq l(k)$, $j'\neq j$, gives $a_j=0$ for all $j\geq l(k)$. Putting that into $f_v=0$ gives $u=0$ or $v=0$. Putting that once more into the condition  $a_j^N=(uv)^N$ gives $a_j=0$ for all $j<k$. Thus, we see that all $f_j$, $j\neq i,k$, $f_u$ and $f_v$ are $0$. This have been shown to be impossible by the argument for regularity, hence we prove the claim that for all $j>l(k)$, $a_j\neq 0$.

Multiplying each $f_j=0$ by $a_j$ gives $a_{l(k)}^N=\cdots =a_N^N=\frac{1}{N}v\prod_{l(k)\leq s\leq N, s\neq i}a_s$ and thus $v\neq 0$. Hence it also follows from $f_v=0$ that $u\neq 0$. The condition $a_j^N=(uv)^N$ then gives $a_j\neq 0$ for all $j<k$. Therefore, $T=NU^{-1}b_k'\prod_{1\leq m\leq k-1, m\neq i}a_m$ is not $0$ at $q$ because  $u$, $a_m$ for $1\leq m\leq k-1, m\neq i$ are all nonzeron as we have seen.

Similar consideration shows that for characteristic $0$ points, the only peculiartiy can happen in the open locus $T\neq 0$.

Now over the locus $T\neq 0$, the blowdown map is an isomorphism since the locus we are blowing up against is always a codimension $1$ irreducible subvariety. Hence it suffices to show that the original projective variety $UT(X_1^N+X_2^N+\cdots+X_N^N)=NX_1X_2\cdots X_N$ admits such an etale map locally. Again we only work with the affine chart $X_i\neq 0$ and writing the intersection with this chart as the affine variety $U_0$
\[
\spec W[T, \upm]'[x_1,\ldots, \hat{x_i},\ldots, x_N]/(UT(1+x_1^N+\cdots+\hat{x_i^N}+\cdots+x_N^N)-N\prod_{j\neq i}x_j)
\]
Take a geometric point $q$ with coordinates $x_j$ sent to $b_j$ in some algebraically closed field. Denote the defining equation by $g$ and its derivatives with respect to each variable $x_j$ evaluated at $q$ as $g_j$. Similar to the argument in (1), the map $U_0\rightarrow \spec W[T, \upm]'[x_1,\ldots, \hat{x_i},\hat{x_j, }\ldots, x_N]$ is etale at $q$ if $g_j\neq 0$. Hence such a map (clearly over $W[T, \upm]'$) exist as long as one of the $g_j\neq 0$. 

If all $g_j=0$, then $UTb_j^{N-1}=\prod_{k\neq j,i}b_k$ for any $j\neq i$. In other words, $b_1^N=\cdots=b_N^N=U^{-1}T^{-1}\prod_{j\neq i}b_j$. Denote this constant by $C$. Also the defining equation gives us $1+b_1^N+\cdots+\hat{b_i^N}+\cdots+b_N^N=NU^{-1}T^{-1}\prod_{j\neq i}b_j$. Hence $1+(N-1)C=NC$ and it follows that $C=1$. Putting it back to the equations give that $UT=\zeta_N$ for an $N$-th root of unity. We have excluded this from the base, thus we conclude that the local expression over $W[T,\upm]'$ exists near all points on $\xxx$.

\end{proof}

\begin{remark}
\label{h0act}
Under the notation of the proof above, we can concretely describe the action of $H_0$ on each $U_k$ as the following: $(\xi_{1},\ldots, \xi_{N})$ acts by $x_j\mapsto \frac{\xi_j}{\xi_i}x_j$ for $1\leq j\leq N, j\neq i,k$, $T\mapsto T$, $b_k'\mapsto \left(\prod_{1\leq j\leq k-1, j\neq i}\frac{\xi_i}{\xi_j}\right)b_k'$ and $b_{l(k)}\mapsto \left(\prod_{l(k)\leq j\leq N, j\neq i}\frac{\xi_i}{\xi_j}\right)b_{l(k)}$.

Thus we may also make all the local etale maps given in the proof to be $H_0$-equivariant with the $H_0$ action on the target be the corresponding multiplication on each coordinates.
\end{remark}

We have seen there is a variety denoted by $\xxx$, that is a blowup with $H_0$ action of the variety $UT(X_1^N+X_2^N+\cdots+X_N^N)=NX_1X_2\cdots X_N$  over $\spec W[T, \upm]'$ and is an isomorphism outside the closed subscheme defined by $T$. $\xxx$ locally admits an etale map to the form given as in \ref{blup}. Thus $\xxx$ further satisfies the property that the base change of it along $W[T, \upm]'\rightarrow \Of_F$, $T\mapsto \pi, U\mapsto u$ 
is a semistable(in the sense of Definition \ref{smst}) model  of its generic fibre, with $H_0$ action. The generic fibre is the underlying motive of the $l$-adic representation $V_{\lambda, (u\pi)^{-1}}$. That follows because the generic fibre factor through the open locus of $\xxx$ where $T\neq 0$. We can use this semistable model and take $Z=\xxx$ to prove Theorem \ref{ssbp} and Proposition \ref{ss} in the case $d=1$.

Base changing the blowdown map we get from Lemma \ref{blup}   along $W[T, \upm]'\rightarrow W[R, \upm]'$, $T\mapsto R^d$, where $\spec W[R, \upm]'$ is the open subscheme of $\spec W[R, \upm]$ given by $(R^dU)^N\neq 1$, we immediately get a blowup $\xxx_d$ of the projective variety $UR^d(X_1^N+X_2^N+\cdots+X_N^N)=NX_1X_2\cdots X_N$ over $\spec W[R, \upm]'$ with an action of $H_0$,  such that the blowdown map is $H_0$-equivariant and $\xxx_d$ locally admits an etale map to some
\[
W[R, \upm]'[Z_1, \ldots, Z_r, Z_{r+1}^{\pm 1}, \ldots, Z_{n}^{\pm 1}]/(UR^d-Z_1\cdots Z_r)
\]

Note that to proceed now, we cannot simply base change $X$ along \linebreak $W[R, \upm]'\rightarrow \Of_F$, $R\mapsto \pi, U\mapsto u$, since then the model has etale locally the form 
\[
\Of_F[Z_1, \ldots, Z_r, Z_{r+1}^{\pm 1}, \ldots, Z_{n}^{\pm 1}]/(u\pi^d-Z_1\cdots Z_r)
\]
which is not semistable.

We state the theorem we would like to prove in this section below.

\begin{theorem}
For any $t\in F$ such that $v(t)=d>0$,  there exists a (e and a extension) totally ramified extension $F'/F$ generated by $\pi^{1/e}$, a choice of $e$-th root of $\pi$ which is a uniformizer of $\Of_F$ (thus $F'$ is purely ramified of degree $e$ over $F$ with uniformizer $\pi^{1/e}$), and a semistable model $\Y$ over $\Of_{F'}$ of $Z_{t^{-1}}$ (defined in the first page)  with compatible $H_0$ action.  

\end{theorem}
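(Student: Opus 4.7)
The plan is to take $\xxx_d$ (already produced in the discussion preceding the theorem) and further resolve it via Mumford's toroidal embedding technique in mixed characteristic, thereby building the variety $Z$ described in Proposition \ref{ss} (which also settles the proposition). Recall that $\xxx_d$ is an $H_0$-equivariant blowup of the naive model $UR^d(X_1^N+\cdots+X_N^N)=NX_1\cdots X_N$ over $\spec W[R,\upm]'$, with etale-local form $W[R,\upm]'[Z_1,\ldots,Z_n^{\pm 1}]/(UR^d-Z_1\cdots Z_r)$, and that the direct specialization $R\mapsto\pi$, $U\mapsto u$ fails to be semistable exactly because of the exponent $d$ on the uniformizer.

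I would proceed in two main steps. First, base change $\xxx_d$ along $W[R,\upm]'\to W[S,\upm]'$ via $R\mapsto S^e$, where $e$ will end up being the ramification index claimed in the theorem; the local etale form becomes $W[S,\upm]'[Z_\bullet]/(US^{de}-Z_1\cdots Z_r)$, which is still not semistable. Second, regard each such local affine piece as a toric variety whose associated rational polyhedral cone $\sigma$ in the cocharacter lattice is simplicial but not smooth, and apply Mumford's toroidal resolution theorem to subdivide $\sigma$ into smooth cones. With a careful and uniform choice of $e$ and of subdivision, each resulting smooth chart takes the form $W[S,\upm]'[Z_\bullet']/(US-Z_1'\cdots Z_{r'}')$, exactly the local description sought in Proposition \ref{ss}. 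After specializing $S\mapsto \pi^{1/e}$, $U\mapsto u$, the local equation becomes $Z_1'\cdots Z_{r'}'=u\pi^{1/e}$ over $\Of_{F'}=\Of_{F(\pi^{1/e})}$, and since $u\pi^{1/e}$ is a uniformizer of $\Of_{F'}$ this is semistable in the sense of Definition \ref{smst}; this specialized model is the $\Y$ promised in the theorem. The $H_0$-equivariance survives all of this because the ideals defining the centers of the successive toric blowups are generated by $H_0$-invariant monomials, as can be read off from the formulas in Remark \ref{h0act}.

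The main obstacle is the combinatorics of the toric subdivision: one must choose $e$ and the subdivisions coherently so that \emph{every} etale local chart of $\xxx_d$ acquires the canonical semistable form after specialization, and so that the subdivisions patch across distinct etale charts (which have different local coordinates, though related by the $H_0$-symmetry and the common toroidal stratification). A secondary subtlety is confirming that the combinatorial recipe is intrinsic enough to the toroidal stratification on $\xxx_d$ to globalize; this is where Mumford's functorial framework, interpreting the whole construction in terms of fans and their subdivisions, is essential. Once the local semistability and the gluing are in place, Theorem \ref{ssbp} and Proposition \ref{ss} follow at once from the construction.
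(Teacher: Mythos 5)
Your proposal follows essentially the same route as the paper: base change $\xxx_d$ to $\xxx_{de}$ via $R\mapsto S^e$, treat $\xxx_{de}$ as a toroidal embedding (over a mixed-characteristic base as in \cite{Mum4}), invoke the semistable-reduction subdivision of the associated conical complex from \cite{Mum3} to produce the blowdown $Z\rightarrow\xxx_{de}$, and argue $H_0$-equivariance from the fact that the ideal sheaf defining the blowup is assembled from $H_0$-fixed Cartier divisors. You also correctly identify the two subtleties that occupy most of the paper's proof --- choosing $e$ and the subdivision uniformly so that every chart attains the canonical semistable local form, and working with the intrinsic conical complex (rather than chart-by-chart cones) to guarantee the subdivision globalizes. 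One small imprecision: saying you "subdivide $\sigma$ into smooth cones" understates what is needed; the paper requires the subdivision of the cross-section $\Delta_{de}^\ast$ at $S=1$ to have integral vertices and simplices of volume $1/d_\alpha!$, which ensures simultaneously that $Z$ is regular \emph{and} that $S$ vanishes to exact order $1$ along every component of the special fibre --- smoothness of the cones alone would give only regularity, not semistability.
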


The idea of the proof goes as the following. Since the $\xxx_d$ above is a blowup of the projective variety $UR^d(X_1^N+X_2^N+\cdots+X_N^N)=NX_1X_2\cdots X_N$ over $\spec W[R, \upm]'$, its base change along $W[R, \upm]'\rightarrow F$, $R\mapsto \pi, U\mapsto u$ is isomorphic to the underlying motive of $V_{\lambda, t^{-1}}$ where $t=u\pi^d$. We will use the theory of toroidal embedding to construct a blowup $Z$ of $\xxx_{de}$ for some $e$ along some locus contained in  the closed subscheme defined by $S$, where $\xxx_{de}$ is the base change of $\xxx_d$ along $W[R, \upm]'\rightarrow W[S, \upm]'$, $T\rightarrow S^e$ (again $\spec W[S, \upm]'$ is defined by the condition $(S^{de}U)^N\neq 1$ in $\spec W[S, \upm]$ as mentioned in the introduction). We want our $Z$ to admit an etale morphism to 
\begin{equation}
W[S, \upm, Z_1, \ldots, Z_r, Z_{r+1}^{\pm 1}, \ldots, Z_{n}^{\pm 1}]/(US-Z_1\cdots Z_r)
\end{equation}
Zariski locally so that we see from construction that its base change along \linebreak $W[S, \upm]\rightarrow \Of_{F'}$, $S\mapsto \pi^{1/e}, U\mapsto u$ gives our desired $\Y$.

We will use the theory of toroidal embeddings in the mixed characteristic case from \cite{Mum4} to reduce construction to a combinatorial problem of subdividing conical complexes, which is also resolved in \cite{Mum3}. So we will sketch the main idea and notation from \cite{Mum1} \cite{Mum2} \cite{Mum3} \cite{Mum4} first.

Let $\eta$ be the generic point of $\spec W$ and we will work with the base $\spec W$. A torus embedding is an irreducible normal variety $X$ of finite type over $\spec W$ with inclusion $T_\eta\inj X$, here $T_\eta$ is the generic fiber, with $T$ acting on $X$ extending the action of $T$ on $T_\eta$. 

Let $M(T), N(T)$ be the character and cocharacter group of $T$. Let $\widetilde{M}(T)$ ($\widetilde{N}(T)$) be $\Z\times M(T)$ ($\Z\times N(T)$) and $N_\R (T)=N(T)\otimes \R$ ($\widetilde{N}_\R (T)=\widetilde{N}(T)\otimes \R$). Like the usual case over a field, affine torus embeddings all come from cones $\sigma$ in $\R_{\geq 0}\times N_\R(T)$ not containing any linear subspace in $N_\R(T)$. The association is as the following: 
\label{deftor}
\[
X_\sigma=\spec R[\cdots, \pi^k\cdot \mathfrak{X}^\alpha, \cdots]_{(k, \alpha)\in \widetilde{M}(T), \ \   \langle(k, \alpha), v\rangle\geq 0, \forall v\in \sigma}
\]
with the orbits of $T_\eta$ in $X_\eta$ corresponding to the faces of $\sigma$ in $0\times N_\R(T)$ and the orbit of $T_0$ in $X_0$ corresponding to the faces of $\sigma$ in $\R_{>0}\times N_\R(T)$. We will mostly deal with the simple case where $\sigma$ has the form $\R_{\geq 0}\times \sigma'$ for some cone $\sigma'\subset N_R(T)$. In that case the orbits of $T_\eta$ in $X_\eta$ correspond naturally to the orbits of $T_0$ in $X_0$ via specialization.

The above is somehow the local picture. We will call an irreducible normal scheme $X$ of finite type over $R$ with an open $U\subset X_\eta$ a toroidal embedding if Zariski locally near a point $x$, there exists an etale morphism $\rho$ from $(U, X)$ to some torus embedding $(T_\eta, Z_{\rho(x)})$ as pairs of schemes. This gives a global stratification of $X-U$ as the following. The existence of local models gives that $X-U=\cup_{i\in I} E_i$ with $E_i$ normal irreducible  subscheme(so no self intersection for these $E_i$) of codimension $1$. The strata are defined as the connected components of $\cap_{i\in J}E_i-\cup_{i\notin J}E_i$ for various subsets $J\subset I$, and for $J=\varnothing$, we take the strata to be $U$. Locally under the etale maps to the torus embedding models $Z_{\rho(x)}$, these strata admit etale maps into the strata of the local models  $Z_{\rho(x)}$, hence the original strata are regular because those of $Z_{\rho(x)}$ are, due to the explicit description. The point of this definition is that we may assign a 
canonical topological space $\Delta$ that is the union of the cones obtained locally from the models. The cones are glued together via inclusion contravariantly with respect to  the specialization of strata. The canonical topological space $\Delta$ reflects lots of properties of $X$. The precise definition of the conical complex $\Delta$ is in page 196 of \cite{Mum4}. It has the structure of a topological space and each cone in it admits an injection into some $\R^m$ with integral structure. We just stress that the main idea is that the character group $M(T)$ of the local model can be canonically defined by our toroidal embedding $(U, X)$ as the group of Cartier divisors on $\Star Y$ supported in $\Star Y-U$ for a strata $Y$ (Star means the union of all the strata that specialize to it) and the cone can be canonically described as the dual cone of the effective Cartier divisors.

Here is the crucial property we use to reduce our construction to the problem of subdividing a complex: Any subdivision $\Delta'$ of $\Delta$ determines a morphism $(U, X')\rightarrow (U, X)$ such that $(U, X')$ is a toroidal embedding with the associated complex $\Delta'$ and that the associated map of complexes $\Delta'\rightarrow\Delta$ is the natural inclusion map. Indeed, the map is a blowdown for an ideal sheaf on $X$ supported in $X-U$ completely determined by the subdivision $\Delta'$.

We also have interpretation in terms of the complex $\Delta$ of the properties that the toroidal embedding variety $X$ being regular and certain Cartier divisor being a sum of irreducible Weil divisors without repetition. In the case over a field, for a Cartier divisor $S$, by restricting to a Cartier divisor in $\Star Y$ we can view it as an element in the character group $M$ associated to this stratum and hence a function on the cone $\sigma $ associated to this stratum. Compatibility gives us that $S$ induce  a  globally defined function on $X$, still denoted by $S$.  Assume the hyperplane $S=1$ in $\Delta$ defined by this function  meets every nonzero faces in $\Delta$, then $X$ is regular and $S$ vanishes to order $1$ on all irreducible components of $X-U$ if and only if the intersection of $S=1$ with any face $\tau_\alpha$ of $\Delta$ has vertices with integral coordinates and the volume of the above intersection equals to $1/d_\alpha!$ for $d_\alpha$ the dimension of $\tau_\alpha$. We will use similar argument to Mumford's in the proof of this fact to prove a similar result in the mixed characteristic case. It essentially reduces to the same proof by adjoining the extra $\R$ factor.

\begin{proof}
To apply the theory of toroidal embeddings to get a blowup of $\xxx_d$ or $\xxx_{de}$, we first need to check $\xxx_d$ and $\xxx_{de}$ have  the structure of toroidal embeddings. Our setting is as the following: let $U_d\subset \xxx_{d, \eta}$ be the open subvariety  defined by $R\neq 0$. We know $\xxx_d$  locally admits an etale map to 
\begin{equation}
\label{etx}
W[R, \upm]'[Z_1, \ldots, Z_r, Z_{r+1}^{\pm 1}, \ldots, Z_{n}^{\pm 1}]/(UR^d-Z_1\cdots Z_r)
\end{equation}
which is an affine torus embedding of the generic torus $T_\eta$ of the split torus $T$ of the form
\[
W[R^{\pm 1}, \upm, Z_1^{\pm 1}, \ldots, Z_r^{\pm 1}, Z_{r+1}^{\pm 1}, \ldots, Z_{n}^{\pm 1}]/(UR^d-Z_1\cdots Z_r)
\]
with $T$ action given by multiplying corresponding coordinates. And clearly $U_d$ is the preimage of the generic torus via $R\neq 0$. Let $e_i$ denote the character given by the regular function $Z_i$ in $X^\ast(T)$, $e_u$ denote the character given by the regular function $U$ and similarly $f_i$ and $f_u$ in $X_\ast(T)$, then the lattice $\widetilde{M}(T)$ is identified with $\Z e_1+\ldots +\Z e_n+\Z e_u+ \Z e_0+ \Z\frac{1}{d}(-e_u+\sum_{j=1}^{r}e_j)$, where $e_0$ corresponds to the extra factor of $\Z$ as in page 191 of \cite{Mum4}. $\widetilde{N}_\R(T)$ is spanned by $f_1,\ldots, f_n, f_u f_0$, and the cone $\sigma$ giving the above toroidal embedding is just $\R_{\geq 0}f_1+\ldots \R_{\geq 0}f_r+\R_{\geq 0}f_0$. The structure of toroidal embedding on $\xxx_{de}$ is defined similarly with the open set $U_{de}$ given by $S\neq 0$.

We now give some description of the local picture and the relationship between the complex associated to $\xxx_d$ and $\xxx_{de}$.  Consider the stratification on $X$ mentioned before the proof, as in page 195 of \cite{Mum4}. It suffices to look at a stratum $Y_d$ of $\xxx_d$ supported in the closed subscheme defined by $\pi$ because for any strata $Y_d'$ not supported in the $(\pi)$ we may intersect the closure of the strata with the closed subscheme defined by $\pi$ and get a strata $Y_d$ which is a specialisation of $Y_d'$ and we have Star$Y_d'$ naturally injects into Star$Y_d$. We may thus suppose that $\Star Y_d$  locally admits an etale map to a scheme of the form \ref{etx}.  We have a bijective correspondance between the strata on the varieties $\xxx_d$ and $\xxx_{de}$ because the fiber over $R=0$ and $S=0$ are isomorphic under pullback. Denote the stratum corresponding to $Y_d$ by $Y_{de}$. Similar local presentation as \ref{etx} holds for the strata $Y_{de}$ of $\xxx_{de}$.  We have the intrinsically defined $\widetilde{M}^{Y_d}, \widetilde{M}^{Y_{de}}, \widetilde{N}^{Y_d}, \widetilde{N}^{Y_{de}}, \widetilde{\sigma}^{Y_d}, \widetilde{\sigma}^{Y_{de}}$ as in page 196 of \cite{Mum4} (their notation without the tilde). All $\widetilde{\sigma}^{Y_d}$ glued into a conical complex $\widetilde{\Delta}_d$ and all $\widetilde{\sigma}^{Y_{de}}$ glued into a conical complex $\widetilde{\Delta}_{de}$.  Note that we have a globally compatible decomposition $\widetilde{M}^{Y_d}=\Z\oplus M^{Y_d}$, $\widetilde{M}^{Y_{de}}=\Z\oplus M^{Y_{de}}$ with the first factor given by $(\pi)$ and the second factor generated by the irreducible Cartier divisors supported in Star$Y_d-U_d$(Star$Y_{de}-U_{de}$) not supported in $(\pi)$. Similarly we have the decomposition $\widetilde{N}^{Y_d}=\Z\oplus N^{Y_d}$, $\widetilde{N}^{Y_{de}}=\Z\oplus N^{Y_{de}}$, $\widetilde{\sigma}^{Y_d}=\Rp \oplus \sigma^{Y_d}$, $\widetilde{\sigma}^{Y_{de}}=\Rp \oplus \sigma^{Y_{de}}$, $\widetilde{\Delta}_{de}=\Rp\oplus \Delta_{de}$, $\widetilde{\Delta}_{d}=\Rp\oplus\Delta_{d}$. Indeed, if   $Y_d$ is a connected component of $(\pi)\cap \cap_{i\in I}E_i\backslash \cup_{i\notin I}E_i$ with $I$ a subset of the irreducible divisors not supported in $\pi$, and let $Y_{d, \eta}$ be the (unique, by local form) strata of the generic fibre $X_\eta$ in $\cap_{i\in I}E_i\backslash \left(\cup_{i\notin I}E_i\cup (\pi)\right)$ that specializes to $Y_d$, then the $M^{Y_d}, M^{Y_{de}}, N^{Y_d}, N^{Y_{de}}, \sigma^{Y_d}, \sigma^{Y_{de}}, \Delta_d, \Delta_{de}$ are the corresponding lattice, cones and conical complexes associated to the strata $Y_{d, \eta}$ of the (field case) toroidal embedding $U_{d}\inj \xxx_{d, \eta}$ defined as in page 59 of \cite{Mum2}. 

We now use the subdivision given in \cite{Mum3}  to construct a blowup of $\xxx_{de}$. Since $S=R^{1/e}$ is globally defined, it gives a Cartier divisor in $\xxx_{de}$ and hence by the process described in the last paragraph before the proof, it lies in each $M^{Y_{de}}$ compatibly by restriction of Cartier divisors. Hence it defines a function on each cone $\widetilde{\sigma}^{Y_{de}}$ compatibly and hence  a function on $\widetilde{\Delta}_{de}$. Denote this function by $l_{de}$, so it defines a closed subset $\widetilde{\Delta}_{de}^\ast =\{x\in\widetilde{\Delta}^{Y_{de}}| l_{de}(x)=1\}$. Clearly $\widetilde{\Delta}_{de}^\ast=\Rp\oplus\Delta_{de}^\ast$ where $\Delta_{de}^\ast\subset\Delta_{de}$ also defined by $l_{de}=1$ and it is a compact convex polyhedral set because the hyperplane $l_{de}=1$ meets every positive dimensional face of $\Delta_{de}$. $\widetilde{\Delta}_{de}^\ast$ and $\Delta_{de}^\ast$ has integral structure given by various lattices $\widetilde{M}^{Y_d}$ and $M^{Y_d}$.   
The upshot of the discussion from page 105-108 of \cite{Mum2} and  \cite{Mum3} is that we may find an $e$ and a subdivision $\{\sigma_\alpha\}$ of $\Delta_{de}^\ast$ into convex polyhedral sets such that all vertices of all $\sigma_\alpha$ lie in $(\Delta_{de}^\ast)_\Z$, the lattice given by the integral structure mentioned above and that the volume (also given by the integral structure as in page 95 of \cite{Mum2}) of each $\sigma_\alpha$ is $1/(d_\alpha)!$ where $d_\alpha$ is the dimension of $\sigma_\alpha$. Adjoining the origin to the vertices gives a conical decomposition $\{\tau_\alpha\}$ of $\Delta_{de}$ associated to $\{\sigma_\alpha\}$. Now by part (d) of page 197 of \cite{Mum4}, if we let $\widetilde{\Delta}'_{de}$ be the subdivided complex associated to the f.r.p.p decomposition $\{\Rp\oplus\tau_\alpha\}$ of $\widetilde{\Delta}_{de}$, it gives a toroidal embedding $(U_{de}, Z)$ with a map $Z\rightarrow \xxx_{de}$ respecting the inclusion of $U_{de}$. Moreover, the associated map of the conical complex is just $\widetilde{\Delta}'_{de}\rightarrow \widetilde{\Delta}_{de}$, and the integral structure is preserved. 

We may associate an $H_0$ action on $(U_{de}, Z)$ such that the map $Z\rightarrow \xxx_{de}$ is $H_0$-equivariant for the following reason: Part (e) in Page 198 of \cite{Mum4} gives that this map we constructed via a subdivision of the conical complex $\widetilde{\Delta}_{de}'$ is a blowup along an ideal sheaf $\ii$ over $\xxx_{de}$. The ideal $\ii$ over $\Star Y_{de}$ can be written as the sum of $\Of_{\xxx_{de}}(-D)$ for a collection of Cartier divisors $D\in \widetilde{M}^{Y_{de}}$ that satisfy certain condition relevant to the subdivision of the cone $\widetilde{\sigma}^{Y_{de}}$. Now we see from the local form of $\xxx_{de}$ that the $H_0$ action fix every Cartier divisor $D\in \widetilde{M}^{Y_{de}}$ (see Remark \ref{h0act}), and hence preserve the sum of these $\Of_{\xxx_{de}}(-D)$. The asserted $H_0$-equivariance thus follows.

We now use the information on the evaluation of $S$ on $\widetilde{\Delta}'_{de}$ to prove the criterion for regularity and the divisor $S$ vanishing to the order $1$ on any irreducible Weil divisors, as well as giving a local model over $W[S, \upm]'$. By the two condition on each $\sigma_\alpha$, we see that each $\tau_\alpha$ is generated as a cone by a set of vectors $\{f_{\alpha, i}\}_{1\leq i\leq d_\alpha+1}\subset (\sigma_\alpha)_\Z \subset (\tau_\alpha)_\Z$ given by the vertices of $\sigma_\alpha$, and by Lemma 4 on Page 96 of \cite{Mum2}, if we denote the linear subspace generated by $\tau_\alpha$ as $(\tau_\alpha)_\R$, then $\{f_{\alpha, i}\}_{1\leq i\leq d_\alpha+1}$ is a basis for the lattice of $(\tau_\alpha)_\R$ given by the restriction of the integral structure $M^{Y_{de}}$ for some $Y_{de}$ such that $\tau_\alpha\subset (N^{Y_{de}})_\R$. Thus it can be expanded into a basis $\{f_{\alpha, i}\}_{1\leq i\leq n}$ of $N^{Y_{de}}$. Thus by changing the corresponding dual basis as well, we write the torus $T_\eta$ in the local model of the strata corresponding to the cone $\R_{\geq 0}\oplus\tau_\alpha$ as $W[\frac{1}{p}][T_1^{\pm 1}, \ldots, T_n^{\pm 1}]$ with the the $T_i\in X^\ast(T)$ being the dual basis of $f_{\alpha, i}$.  Since the cone $\Rp\oplus\tau_\alpha$ is now generated by $\Rp f_0+\sum_{i=1}^{d_\alpha+1}\Rp f_{\alpha, i}$, the local model is by definition \ref{deftor} isomorphic to $W[T_1, \ldots, T_{d_\alpha+1}, T_{d_\alpha+2}^{\pm 1}, \ldots, T_n^{\pm 1}]$ and hence smooth over $W$. Now the globally defined $U\in X^\ast(T)$ is a unit in the coordinate ring $W[T_1, \ldots, T_{d_\alpha+1}, T_{d_\alpha+2}^{\pm 1}, \ldots, T_n^{\pm 1}]$, hence $U$ can be written as $T_{d_\alpha+2}^{c_{d_\alpha+2}} \cdots T_n^{c_n}$. We also know that $U$ is non-divisible in the finite free abelian group $X^\ast(T)$ as this only involves the integral structure and hence could be checked over $\xxx_{de}$ where an explicit local model gives the result. It follows that there are no nontrivial common divisor of the $c_{d_\alpha+2}, \ldots, c_n$, and hence we can change coordinate and write the coordinate ring as $W[T_1, \ldots, T_{d_\alpha+1}, T_{d_\alpha+2}'^{\pm 1}, \ldots, T_{n-1}'^{\pm 1}, \upm]$ over $W[\upm]$. Furthermore,  since $S\in \widetilde{M}^{Y_{de}}$ evaluated at each $f_{\alpha, i}$, $1\leq i\leq d_\alpha+1$ is $1$ and evaluated at the basis of the first copy of $\Rp$ is $0$, we see that $S=\prod_{i=1}^{d_\alpha+1}T_i\cdot\prod_{j=d_\alpha+2}^{n-1}T_j'^{d_j}\cdot U^s$, and hence over $W[S, \upm]$, we may change coordinate by $T_1'\mapsto T_1\prod_{j=d_\alpha+2}^{n-1}T_j'^{d_j}\cdot U^{s+1}$, $T_i'\mapsto T_i$ for $2\leq i\leq d_\alpha+1$, and $T_j'\mapsto T_j'$ for $d_\alpha+2\leq j\leq n-1$, we have that locally $Z$ admits an etale map to $W[S, \upm, T_1', \ldots, T_{d_\alpha+1}', T_{d_\alpha+2}'^{\pm 1}, \ldots, T_{n-1}'^{\pm 1}]/(SU-\prod_{i=1}^{d_\alpha+1}T_i')$ over $W[S, \upm]$.

We deduce from the local description above that the base change of $Z$ over $W[S, \upm]'$ along $W[S, \upm]'\rightarrow \Of_{F'}: \ S\mapsto \pi^{1/e}, U\mapsto u$, which we denote by $\Y$, admits an etale map to $\Of_{F'}[T'_1, \ldots, T'_{d_\alpha+1}, T_{d_\alpha+2}'^{\pm 1}, \ldots, T_{n-1}'^{\pm 1}]/(\pi^{1/e}u-\prod_{i=1}^{d_\alpha+1}T_i)$. Since $\pi^{1/e}$ is a uniformizer of $\Of_{F'}$, we see that $\Y$ is regular and semistable over $\Of_{F'}$. We may further identify its generic fibre with the projective variety over $F'$ defined by the equation 
\[
u\pi^d(X_1^N+\cdots+X_N^N)=NX_1\cdots X_N
\]
compatible with the $H_0$ action because the generic fibre of $\Y$ is also the base change of $Z_\eta$ along $W[\frac{1}{p}][S, \upm]\rightarrow F': \ S\mapsto \pi^{1/e}, U\mapsto u$. 
The map $Z_\eta\rightarrow \xxx_{de, \eta}$ is an isomorphism outside $S=0$, and hence the above base change of $Z$ is isomorphic to the fibre over $S=\pi^{1/e}$ of $\xxx_{de, \eta}$, which has the above form from its definition and corresponding property of $\xxx_{d, \eta}$. The $H_0$ compatibility follows from the $H_0$ compatibility of the map $Z\rightarrow \xxx_{de}$.

\end{proof}

We now analyze the special fibre of $\Y$ because we want to apply Hyodo-Kato's semistable comparison theorem.

\begin{remark}
\label{link}
Denote the canonical log structure on $\Y$ given by the Cartier divisor $(\pi^{1/e})$ by $\nn$, the special fibre with pullback log structure by $(\overline{\Y}, \overline{\nn})$, the log structure on $Z$ given by the divisor $(S)$ by $M$. Picking a $u'\in W^\times$ that have the same reduction as $u$ in $k^\times$, we denote the base change of $Z$ with pullback log structure from $M$ along  $W[S, U^{\pm 1}]'\rightarrow W[T]':\ U\rightarrow u',\ S\mapsto T$ by $(Y, N)$ (here $\spec W[T]'$ is the open subscheme of $\spec W[T]$ defined by $(T^{de}u')^N\neq 1$), and further base change along $W[T]\rightarrow k$, $T\mapsto 0$ with pullback log structure by $(\overline{Y_1}, \overline{N_1})$. Then we have that as log schemes over $k$ with $H_0$ action, $(\overline{\Y}, \overline{\nn})\cong (\overline{Y_1}, \overline{N_1})$ (because they are both base change of  $(Z, M)$ by $W[S, \upm]'\rightarrow k:$ $S\mapsto 0, U\mapsto \overline{u}$) and $Y$ is a blowup of the projective variety $u'T^{de}(X_1^N+X_2^N+\cdots+X_N^N)=NX_1X_2\cdots X_N$ over $\spec W[T]'$ with equivariant $H_0$-action.

\end{remark}

\section{Comparison Theorem of Log Geometry}

We start by introducing some notation we will use in log geometry, and then we will specialize to the case we work in. We will denote a general log scheme by $(X, M)$ where $X$ is a usual scheme and $M$ denotes the log structure. When $M$ is a monoid, it is to be interpretted as a chart for this log structure. In particular, let $(X, \N, 1\mapsto f)$ be the log scheme determined by the chart $\N$ with $1$ sent to the $f\in \Of_X(X)$ under the log structure map. And let $(X, (0))$ denote the log scheme $X$ with trivial log structure.  When there is only one log structure mentioned on a scheme $X$, we will simply use $X$ to refer to the log scheme $(X, M)$.

We also record the following lemma from \cite{ogus} Chapter 4 Proposition 1.3.1:

\begin{lemma}
\label{pback}
Let 
\[
\begin{tikzcd}
(X', M')\arrow[r, "g"]\arrow[d, "f'"]&
(X, M)\arrow[d, "f"]\\
(Y', N')\arrow[r, "h"]&
(Y, N)
\end{tikzcd}
\]
be a Cartesian diagram of log schemes. Then we have a natural isomorphism 
\[
g^\ast \omega^1_{(X, M)/(Y, N)}\cong \omega^1_{(X', M')/(Y', N')}
\]
\end{lemma}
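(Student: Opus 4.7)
The plan is to verify the isomorphism via the universal property of the sheaf of log differentials, namely that $\omega^1_{(X,M)/(Y,N)}$ represents the functor on $\Of_X$-modules sending $E$ to the set of log derivations $(D,\delta)$ of $(X,M)$ into $E$ over $(Y,N)$, where $D\colon \Of_X\to E$ is a derivation over $f^{-1}\Of_Y$ and $\delta\colon M^{gp}\to E$ is an additive map vanishing on the image of $N^{gp}$ and satisfying $D(\alpha(m))=\alpha(m)\delta(m)$ for $m\in M$. Once both objects in the proposed isomorphism are identified with representing objects of compatible functors, the isomorphism follows from Yoneda.

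Concretely, I would first reduce to the affine situation, since the statement is local on $X'$ (both sides are quasi-coherent sheaves). After localizing, I can choose coherent charts $Q\to N$ and $P\to M$ for the log structures with a compatible chart for $f$ in the sense of a map $Q\to P$, and pull them back along $h$ and $g$ to obtain charts on $N'$ and $M'$ fitting into the Cartesian diagram. In this chart situation, Ogus gives the presentation
\[
\omega^1_{(X,M)/(Y,N)}\cong \bigl(\Omega^1_{X/Y}\oplus (\Of_X\otimes_\Z M^{gp})\bigr)/R,
\]
where $R$ is the submodule generated by relations of the form $(d\alpha(m),-\alpha(m)\otimes m)$ for $m\in M$ and $(0,1\otimes n)$ for $n$ in the image of $N^{gp}\to M^{gp}$. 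Applying $g^\ast$ to this presentation and using that $g^\ast \Omega^1_{X/Y}\cong \Omega^1_{X'/Y'}$ together with the fact that $M'^{gp}$ is obtained from $M^{gp}$ by pushout along $N^{gp}\to N'^{gp}$ (a standard property of the pullback in fine log schemes when charts are chosen compatibly), one recognizes the resulting presentation as precisely the one computing $\omega^1_{(X',M')/(Y',N')}$.

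The main conceptual step, and the one that needs the most care, is ensuring that the chart-level construction of the pullback $(X',M')=(X\times_Y Y', \text{pullback log structure})$ really does induce the pushout $M^{gp}\oplus_{N^{gp}} N'^{gp}$ on the level of the relevant monoid groups locally; this is where one uses that the log structures are fine (and coherent after localizing). Given this, the identification of the two presentations is formal, and the isomorphism is visibly natural and independent of the choice of charts, hence glues to a global isomorphism of quasi-coherent sheaves.

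An alternative, more intrinsic route avoids charts altogether: one checks directly that pulling back a log derivation $(D,\delta)$ of $(X,M)/(Y,N)$ along $g$ yields a log derivation of $(X',M')/(Y',N')$, that this assignment is functorial and compatible with the monoidal structure on log derivations, and that it induces a bijection on sets of log derivations into any pulled-back $\Of_{X'}$-module. Then applying this to $E=g^\ast\omega^1_{(X,M)/(Y,N)}$ produces the desired natural map, and its inverse is constructed using the universal log derivation of $(X',M')/(Y',N')$ together with the compatibility of the Cartesian square. This functorial version has the advantage of being immediate from the definitions but hides the explicit verification inside the general theory developed in \cite{ogus}.
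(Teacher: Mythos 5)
The paper does not actually prove this lemma: it simply records it and cites Ogus, Chapter~IV, Proposition~1.3.1, so there is no in-text argument to compare yours against. Your proposal is the standard proof of this fact, and both of your routes (charts plus explicit presentation, or direct functoriality of log derivations) are sound in outline and essentially reproduce the argument in Ogus.

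One small caution on the chart route. The presentation $\omega^1_{(X,M)/(Y,N)}\cong \bigl(\Omega^1_{X/Y}\oplus(\Of_X\otimes_{\Z}M^{gp})\bigr)/R$ is really a presentation attached to a (pre)log structure, and the log structure $M'$ on the fiber product is the log structure \emph{associated to} the pushout prelog structure $M\oplus_N N'$; it is not literally true that $M'^{gp}$ is $M^{gp}\oplus_{N^{gp}}N'^{gp}$ without passing through that associated-log-structure step. So to make the identification of presentations go through you need to invoke the (easy, but worth stating) fact that $\omega^1$ is unchanged when you replace a prelog structure by its associated log structure. You also need charts to exist at all, which requires coherence; this is automatic in the fine setting in which the paper applies the lemma, but the statement as written is for arbitrary log schemes. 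Your second, chart-free route via the universal property of log derivations avoids both of these issues and is the cleaner way to prove the lemma in full generality, so it is worth making that the primary argument rather than the ``alternative.''
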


\

We will work with $(Y, N)$ being a log smooth scheme over $(W[T]', \N, 1\mapsto T)$, where $\spec W[T]'$ is a fixed choice of affine open subscheme of $\spec W[T]$ such that the closed subscheme defined by $T=0$ is contained in $\spec W[T]'$. In particular, we will apply the theory to the $(Y, N)$ we get from last section in Remark \ref{link} over the particular base $(W[T]', \N, 1\mapsto T)$(although same notation).  Denote by $(Y_n, N_n)$ (resp. $(\overline{Y_n}, \overline{N_n})$) the base change of $(Y, N)$ along $(W_n[T]', \N, 1\mapsto T)\rightarrow (W[T]', \N, 1\mapsto T)$(resp. $(W_n, \N, 1\mapsto 0)\rightarrow (W[T]', \N, 1\mapsto T)$). Let $(\overline{Y}, \overline{N})$ be the base change of $(Y, N)$ along $(W, \N, 1\mapsto 0)\rightarrow (W[T]', \N, 1\mapsto T)$.  Note that the closed immersions in the corresponding fibre diagram are all exact and the projection maps to the base are all log smooth.

For a base 4-ple $(S, L, I, \gamma)$, where $S$ is a scheme with $\Of_S$ killed by some integer $p^n$, $L$ is a fine log structure on it (see \cite{HK} (2.6)), $I$ is a quasi-coherent ideal on $S$ and $\gamma$ is a PD structure on $I$, and a fine log scheme $(X, M)$ over $(S, L)$ such that $\gamma$ extends to $X$, we may define the cristalline site and cristalline cohomology as in \cite{HK} (2.15).

Take $(X, M)=(\overline{Y_1}, \overline{N_1})$ and $(S, L)=(W_n, \N, 1\mapsto 0)$ with the ideal $I=(p)$ and its usual PD structure, \cite{HK} define the $i$-th log cristalline cohomology of $(\overline{Y_1}, \overline{N_1})$
as the limit
\[
\varprojlim_{n}H^{i}(((\overline{Y_1}, \overline{N_1})/(W_n,\N, 1\mapsto 0))_{crys},\Of_{\overline{Y_1}/W_n})[\frac{1}{p}]
\]
and they also equip it with a nilpotent operator $N$ that will be the same as the $N$ given by $p$-adic Hodge theory when identified by the comparison isomorphism.

\begin{theorem}
\label{log1}
The above limit is isomorphic to $\Hh^i(\dR_{\overline{Y}/(W, \N, 1\mapsto 0)})[\frac{1}{p}]$  while the operator $N$ is given as the degree $i$ boundary homomorphism of the long exact sequence given by the following exact triangle
    
\begin{tikzcd}
\dR_{\overline{Y}/(W, \N, 1\mapsto 0)}[-1]\arrow[r, "\cdot dlog 1"]
&\dR_{\overline{Y}/(W, (0))}\arrow[r]
&\dR_{\overline{Y}/(W, \N, 1\mapsto 0)}\arrow[r]  &{}
\end{tikzcd}

which is defined at each degree $i$ by taking the $i$-th wedge power  of the following locally split exact sequence of locally free sheaf of modules
\[
\begin{tikzcd}
0\arrow[r]&
\Of_{\overline{Y}}\arrow[r, "\cdot d\log 1"]&
\omega^1_{\overline{Y}/(W, (0))}\arrow[r]&
\omega^1_{\overline{Y}/(W, \N, 1\mapsto 0)}\arrow[r]&
0
\end{tikzcd}
\]
by Theorem 3.2.3 of \cite{ogus} since we note that $(\overline{Y},\overline{N})$ is log smooth over $(W, \N, 1\mapsto 0)$ and we identify $\omega^1_{(W, \N, 1\mapsto 0)/(W, (0))}\cong W$ by $d\log 1\mapsto 1$.
\end{theorem}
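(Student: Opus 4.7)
The plan splits into two stages: prove the abstract isomorphism of cohomology groups, then identify the monodromy operator.

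For the first stage, the key input is Kato's log crystalline Poincar\'e lemma. The geometric observation is that $(\overline{Y_n}, \overline{N_n})$ is a log smooth lift of $(\overline{Y_1}, \overline{N_1})$ over $(W_n, \N, 1\mapsto 0)$: log smoothness of $(Y, N)$ over $(W[T]', \N, 1\mapsto T)$ transfers under the exact base change along the closed immersion $(W_n, \N, 1\mapsto 0) \hookrightarrow (W[T]', \N, 1\mapsto T)$ defined by $T \mapsto 0$, and Lemma \ref{pback} identifies the relative log differentials with the base change of those of $(\overline{Y}, \overline{N})/(W, \N, 1\mapsto 0)$. Kato's theorem then yields
\[
H^{i}\bigl(((\overline{Y_1}, \overline{N_1})/(W_n, \N, 1\mapsto 0))_{crys}, \Of_{\overline{Y_1}/W_n}\bigr) \cong \Hh^{i}\bigl(\overline{Y_n}, \omega^{\bullet}_{\overline{Y_n}/(W_n, \N, 1\mapsto 0)}\bigr),
\]
and taking $\varprojlim_n$ together with inverting $p$ (using finiteness from properness plus the Mittag--Leffler property to commute the limit past hypercohomology) produces the desired isomorphism with $\Hh^i(\dR_{\overline{Y}/(W, \N, 1\mapsto 0)})[\frac{1}{p}]$.

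For the second stage I would use the Hyodo--Kato construction of $N$ through a PD-thickening: enlarging the base to $(W_n\langle t\rangle, \N, 1 \mapsto t)$ with $t$ carrying divided powers and $t \mapsto 0$ on underlying rings, the crystalline cohomology computed over this bigger base acquires a canonical connection whose residue with respect to $d\log t$ is $N$. In our setting the ambient model $(Y_n, N_n) \to (W_n[T]', \N, 1\mapsto T)$ furnishes a log smooth geometric realization of such a thickening (after PD-completion along $T=0$), so the resulting connection on hypercohomology is computed from the absolute log de Rham complex $\dR_{\overline{Y}/(W, (0))}$ together with the Koszul filtration coming from the short exact sequence of log differentials in Theorem 3.2.3 of \cite{ogus}. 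Taking $i$-th wedge powers of that sequence yields the exact triangle in the statement, and the residue of the connection identifies with the degree-$i$ boundary map of that triangle.

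The principal technical obstacle is the second stage: one must trace through the Hyodo--Kato definition of $N$ on crystalline cohomology, transport it via the Poincar\'e comparison onto the log de Rham side, and verify it agrees with the claimed boundary map. This is formally analogous to identifying the Gauss--Manin connection with the connecting morphism in a Koszul-filtration spectral sequence, but the trivial log structure on the target $(W, (0))$ together with the degenerate map $1 \mapsto 0$ on the source prevent a direct quotation of the smooth-case formalism, so I expect to verify the matching by local computations using the explicit \'etale charts of $(\overline{Y}, \overline{N})$ provided by Proposition \ref{ss} and Remark \ref{link}.
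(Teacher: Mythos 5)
Your broad architecture is correct and essentially matches the paper's: first identify the finite-level log crystalline cohomology with the log de Rham cohomology of the lift via the Hyodo--Kato/Kato comparison (the paper's Proposition \ref{3.a}, quoting \cite{HK}~(2.18)), then read off $N$ from the PD-thickening $(W_n\langle T\rangle', \N, 1\mapsto T)$ and the log smooth model $(Y_n, N_n)$ over $(W_n[T]', \N, 1\mapsto T)$ as the paper's Proposition \ref{3.11}, quoting \cite{HK}~(3.6). However, two points in your plan would give you trouble.

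First, the passage to the inverse limit is under-justified. Invoking ``finiteness plus Mittag--Leffler to commute the limit past hypercohomology'' does not, on its own, produce $\Hh^i(\dR_{\overline{Y}/(W,\N,1\mapsto 0)})[\tfrac{1}{p}]$: the mod-$p^n$ hypercohomology groups $\Hh^{i}(\dR_{\overline{Y_n}/(W_n, \N, 1\mapsto 0)})$ differ from $\Hh^i(\dR_{\overline{Y}/(W, \N, 1\mapsto 0)})/p^n$ by a torsion term, namely $\Hh^{i+1}(\dR_{\overline{Y}/(W, \N, 1\mapsto 0)})[p^n]$, coming from the derived tensor $\otimes^{\Ld}_W W_n$. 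The paper's step (ii) establishes the universal-coefficient short exact sequences (display \ref{conn}) relating these three groups, with the transition map on the torsion factor being multiplication by $p$, so that in the inverse limit the error term vanishes (step (iii)). You need this or an equivalent torsion analysis; Mittag--Leffler does not by itself identify $\varprojlim_n \Hh^i(\dR_{\overline{Y_n}/(W_n, \N, 1\mapsto 0)})$ with the limit you want. You must also carry the $N$-equivariance through the limiting step, which requires proving each map in the diagram \ref{conn} is $N$-equivariant; the paper does this by deriving \ref{conn} from a morphism of exact triangles on the level of $R\pi_{\infty,\ast}$.

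Second, for the identification of $N$ with the degree-$i$ boundary map, you propose to fall back on local computations using the explicit \'etale charts of Proposition \ref{ss} and Remark \ref{link}. This is the wrong move for two reasons. The theorem is stated for a general log smooth $(Y, N)/(W[T]', \N, 1\mapsto T)$, not just the Dwork family, so those specific charts are not available in the generality you need. More importantly, the paper shows the identification is purely formal: one takes the exact triangle of complexes $C_{\overline{Y_1}, Y_n\langle\rangle/(W_n\langle T\rangle', \N, 1\mapsto T)}[-1] \to C_{\overline{Y_1}, Y_n/(W_n, (0))} \to C_{\overline{Y_1}, Y_n\langle\rangle/(W_n\langle T\rangle', \N, 1\mapsto T)}$ from \cite{HK}~(3.6), identifies each term with log de Rham complexes via Lemma \ref{pback} and the PD-envelope description, applies $\otimes^\Ld_{W_n\langle T\rangle'} W_n$, and observes that the result is exactly the exact triangle in the theorem statement (the paper's diagram \ref{dr}). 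No local charts or explicit computation are needed; the compatibility is a consequence of the naturality of the PD-envelope complex $C_{X,Z/S}$ with respect to the data $(X,M)\to(Z,N)\to(S,L,I,\gamma)$ stated after Proposition \ref{3.a}. Your instinct that ``the trivial log structure on the target $(W,(0))$ together with the degenerate map $1\mapsto 0$'' makes this delicate is reasonable, but the paper's mechanism (HK's own formalism plus the exact base change) handles it without local analysis.
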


\begin{proof}
We will prove the theorem in 3 steps. 

(i) We prove that there exists an $N$-equivariant isomorphism
\begin{equation}
\label{1claim}
H^{i}(((\overline{Y_1}, \overline{N_1})/(W_n,\N, 1\mapsto 0))_{crys},\Of_{\overline{Y_1}/W_n})\cong 
\Hh^{i}(\dR_{\overline{Y_n}/(W_n, \N, 1\mapsto 0)})
\end{equation}
with the operator $N$ on the right hand side given by the degree $i$ boundary homomorphism of the following exact triangle
\[
\begin{tikzcd}
\dR_{\overline{Y_n}/(W_n, \N, 1\mapsto 0)}[-1]\arrow[r, "\cdot dlog 1"]
&\dR_{\overline{Y_n}/(W_n, (0))}\arrow[r]
&\dR_{\overline{Y_n}/(W_n, \N, 1\mapsto 0)}\arrow[r]
&{}
\end{tikzcd}
\]
obtained via various wedge power of the following exact sequence as the process above
\[
\begin{tikzcd}
0\arrow[r]&
\Of_{\overline{Y_n}}\arrow[r, "\cdot d\log 1"]&
\omega^1_{\overline{Y_n}/(W_n, (0))}\arrow[r]&
\omega^1_{\overline{Y_n}/(W_n, \N, 1\mapsto 0)}\arrow[r]&
0
\end{tikzcd}
\]
The transition maps in the limit 
\[
\varprojlim_{n}H^{i}(((\overline{Y_1}, \overline{N_1})/(W_n,\N, 1\mapsto 0))_{crys},\Of_{\overline{Y_1}/W_n})
\]
is compatible with the pullback homomorphism of cohomology \linebreak $\Hh^{i}(\dR_{\overline{Y_{n+1}}/(W_{n+1}, \N, 1\mapsto 0)})\rightarrow \Hh^{i}(\dR_{\overline{Y_n}/(W_n, \N, 1\mapsto 0)})$ under the isomorphism \ref{1claim}.

(ii) We will prove there exists commuting short exact sequences of $W$ modules
\begin{equation}
\begin{tikzcd}[column sep=0.5em]
0\arrow[r]&
\Hh^{i}(\dR_{\overline{Y}/(W, \N, 1\mapsto 0)})/p^{n+1}\arrow[r]\arrow[d]& \Hh^{i}(\dR_{\overline{Y_{n+1}}/(W_{n+1}, \N, 1\mapsto 0)})\arrow[r]\arrow[d]&
\Hh^{i+1}(\dR_{\overline{Y}/(W, \N, 1\mapsto 0)})[p^{n+1}]\arrow[r]\arrow[d, "\cdot p"]&
0\\
0\arrow[r]&
\Hh^{i}(\dR_{\overline{Y}/(W, \N, 1\mapsto 0)})/p^{n}\arrow[r]& \Hh^{i}(\dR_{\overline{Y_{n}}/(W_n, \N, 1\mapsto 0)})\arrow[r]&
\Hh^{i+1}(\dR_{\overline{Y}/(W, \N, 1\mapsto 0)})[p^{n}]\arrow[r]&
0
\end{tikzcd}
\end{equation}
where the middle vertical map is the pullback map, the left vertical map is natural reduction and the right vertical map is multiplication by $p$. All maps in the left commutative square are $N$ equivariant.

(iii) Conclude Theorem \ref{log1}.

\ 

We first prove (i).

Again we recall the base 4-ple $(S, L, I, \gamma)$ being a fine $p^n$ torsion log scheme with a quasi-coherent ideal $I$ and a PD structure on it. In the discussion below, we will always take $I=(p)$ and the usual PD structure on it. This PD structure extends to all schemes $X$ over $S$. Recall the following theory of log cristalline cohomology of crystals. This is the basic case in (2.18) of \cite{HK} combined with (2.17). We will not recall the notions in the proposition but refer to the same reference.

\begin{proposition}
\label{3.a}
For $(X,M)/(S,L)$ log schemes, if there exists a log smooth $(Z,N)/(S,L)$ with a closed immersion $i: (X,M)\hookrightarrow (Z,N)$. Denote the (log) PD envelope of $i: (X,M)\hookrightarrow (Z,N)$ by $(D,M_D)$, then there exists a complex of $\Of_Z$-module $C_{X, Z/S}$ of the form:

\begin{tikzcd}
\Of_D\arrow[r, "\nabla"] & \Of_D\otimes_{\Of_Z}\omega^1_{Z/S}\arrow[r, "\nabla"]&
\Of_D\otimes_{\Of_Z}\omega^2_{Z/S}\arrow[r, "\nabla"]&
\cdots
\end{tikzcd}

where $\omega^1_{Z/S}$ is the log differential module, and Leibniz rules are satisfied:
\[
\nabla(m\otimes \omega)=\nabla(m)\otimes \omega+ m\otimes d\omega \ \ (m\in \Of_D\otimes_{\Of_Z}\omega^i_{Z/S}, \ \ \omega\in \omega^j_{Z/S})
\]
such that the cohomology of this complex as an object in $D(Sh(X_{et}))$ computes the log cristalline cohomology $H^\cdot(((X,M)/(S,L))_{crys}, \Of_{X/S})$.

\end{proposition}

\begin{remark}
\cite{HK} (2.18) also gives that the association of the complex $C_{X,Z/S}$ is natural with respect to the system $(X, M)\rightarrow (Z, N)\rightarrow (S, L, I, \gamma)$.
\end{remark}

Apply this proposition to $(X, M)=(\overline{Y_1}, \overline{N_1})$, $(S, L)=(W_n, \N, 1\mapsto 0)$ and the closed immersion $i: (\overline{Y_1}, \overline{N_1})\rightarrow (\overline{Y_n}, \overline{N_n})$ whose ideal of definition is generated by $p$, hence $(D, M_D)=(\overline{Y_n}, \overline{N_n})$ and by Leibniz rule we see that $C_{\overline{Y_1}, \overline{Y_n}/(W_n, \N, 1\mapsto 0)}=\dR_{\overline{Y_n}/(W_n, \N, 1\mapsto 0)}$ and hence the abstract isomorphism in \ref{1claim} follows. We still need to show it's $N$-equivariant.

There is also a description of the operator $N$ on the log cristalline cohomology in terms of the complexes mentioned above, see \cite{HK} (3.6). We simplify it a little bit.

\begin{proposition}\label{3.11}
Suppose there exist a (log) closed immersion $(X,M)\hookrightarrow (Z,N)$ over $(W_n[T]', \N, 1\mapsto T)$ with $(Z,N)$ log smooth over $(W_n[T]', \N, 1\mapsto T)$. Then there exist an exact triangle
\[
C_{X, Z\langle\rangle/(W_n\langle T\rangle', \N, 1\mapsto T)}[-1]\rightarrow C_{X, Z/(W_n, (0))}\rightarrow C_{X, Z\langle\rangle/(W_n\langle T\rangle', \N, 1\mapsto T)}\rightarrow
\]
where $W_n\langle T\rangle'$ denotes the usual PD envelope of $W_n[T]'$ along the ideal $(t)$ and $Z\langle\rangle$ is the base change of $Z$ along $W_n[T]'\rightarrow W_n\langle T\rangle'$ with the pullback log structure. We refer the reader to \cite{HK} (3.6) for the definition of this exact triangle.

Tensoring the above exact triangle by $\otimes^\Ld_{W_n\langle T\rangle'}W_n$, we get:
\[
C_{X, \overline{Z}/(W_n, \N, 1\mapsto 0)}[-1]\rightarrow C_{X/(W_n, (0))}\otimes^\Ld_{W_n\langle T\rangle'}W_n\rightarrow C_{X, \overline{Z}/(W_n, \N, 1\mapsto 0)}\rightarrow
\]
where $\overline{Z}$ is the base change of $Z$ along $W_n[T]'\rightarrow W_n, t\mapsto 0$ with the pullback log structure, so that the operator $N$ on $H^i(((X,M)/(W_n,\N))_{crys},\Of_{X/W_n})$ is given by the connecting homomorphism of degree $i$ of this exact triangle under the identification given after Proposition \ref{3.a}.

\end{proposition}

In fact, we will use this proposition in our case for $(X, M)=(\overline{Y_1}, \overline{N_1})$ and $(Z, N)=(Y_n, N_n)$. The first is the base change of the second along $(W_n, \N, 1\mapsto 0)\rightarrow (W_n[T]', \N, 1\mapsto T)$. Thus $Z\langle\rangle=(Y_n\langle\rangle, N_n\langle\rangle)$ as log schemes, the latter of which we define to be the base change of $(Y_n,N_n)\rightarrow (W_n[T]', \N, 1\mapsto T)$ along $(W_n\langle T\rangle', \N, 1\mapsto T)$. Also $\overline{Z}=(\overline{Y_n}, \overline{N_n})$ as log schemes.  We would identify the objects appearing in the above proposition in more concrete terms.

Here is a diagram illustrating the situation:

\begin{tikzcd}
(\overline{Y_1}, \overline{N_1})\arrow[dr]\arrow[r]&(Y_n\langle\rangle, N_n\langle\rangle)\arrow[d]\arrow[r]&(Y_n, N_n)\arrow[d]\\
&(W_n\langle T\rangle', \N, 1\mapsto T)\arrow[dr]\arrow[r]&(W_n[T]', \N, 1\mapsto T)\arrow[d]\\
&&(W_n, (0))
\end{tikzcd}

Apply Proposition \ref{3.a} to $(X, M)=(\overline{Y_1}, \overline{N_1})$, $(S, L)=(W_n\langle T\rangle', \N, 1\mapsto T)$ and the closed immersion $(\overline{Y_1}, \overline{N_1})\rightarrow  (Y_n\langle\rangle, N_n\langle\rangle)$. Since the closed immersion is exact and the ideal of definition is generated by $p$ and all the divided powers of $t$, $(D, M_D)=(Y_n\langle\rangle, N_n\langle\rangle)$ in this case. Proposition \ref{3.a} gives that $C_{\overline{Y_1}, Y_n\langle\rangle/(W_n\langle T\rangle', \N, 1\mapsto T)}\cong \dR_{Y_n\langle\rangle/(W_n\langle T\rangle', \N, 1\mapsto T)}$

Apply Proposition \ref{3.a} to $(X, M)=(\overline{Y_1}, \overline{N_1})$, $(S, L)=(W_n, (0))$ and the closed immersion $(\overline{Y_1}, \overline{N_1})\rightarrow  (Y_n, N_n)$. $(Y_n, N_n)$ is log smooth over $(W_n, (0))$ because it is log smooth over $(W_n[T]', \N, 1\mapsto T)$ and $(W_n[T]', \N, 1\mapsto T)$ is log smooth over $(W_n, (0))$ by the smoothness criterion (2.9) in \cite{HK}. In this case $(D, M_D)=(Y_n\langle\rangle, N_n\langle\rangle)$ since the ideal of deinition given by the closed immersion $i$ is generated by $p,t$. Proposition \ref{3.a} shows that $C_{\overline{Y_1}, Y_n/(W_n, (0))}$ is given by a complex 

\begin{tikzcd}
\Of_{Y_n\langle\rangle}\arrow[r, "\nabla"] & \Of_{Y_n\langle\rangle}\otimes_{\Of_{Y_n}}\omega^1_{Y_n/(W_n, (0))}\arrow[r, "\nabla"]&
\Of_{Y_n\langle\rangle}\otimes_{\Of_{Y_n}}\omega^2_{Y_n/(W_n, (0))}\arrow[r, "\nabla"]&\cdots
\end{tikzcd}

Via the application of Proposition \ref{3.a} indicated above, and identifying \linebreak $\omega^i_{Y_n\langle\rangle/(W_n\langle T\rangle', \N, 1\mapsto T)}\cong \Of_{Y_n\langle\rangle}\otimes_{\Of_{Y_n}}\omega^i_{Y_n/(W_n[T]', \N, 1\mapsto T)}\cong W_n\langle T\rangle'\otimes_{W_n[T]'}\omega^i_{Y_n/(W_n[T]', \N, 1\mapsto T)}$,   one can verify that using the recipe given in (3.6) of \cite{HK},   the exact triangle sequence
\[
C_{\overline{Y_1}, Y_n\langle\rangle/(W_n\langle T\rangle', \N, 1\mapsto T)}[-1]\rightarrow C_{\overline{Y_1}, Y_n/(W_n, (0))}\rightarrow C_{\overline{Y_1}, Y_n\langle\rangle/(W_n\langle T\rangle', \N, 1\mapsto T)}\rightarrow
\]

is given by actual chain maps 
\begin{equation}
\label{logex}
\begin{tikzcd}[column sep=0.5em]
0\arrow[r]&\Of_{Y_n\langle\rangle}\otimes_{\Of_{Y_n}} \omega^{i-1}_{Y_n/(W_n[T]', \N, 1\mapsto T)}\arrow[r, "\cdot d\log T"]&
\Of_{Y_n\langle\rangle}\otimes_{\Of_{Y_n}}\omega^i_{Y_n/(W_n, (0))}\arrow[r]&
\Of_{Y_n\langle\rangle}\otimes_{\Of_{Y_n}} \omega^{i}_{Y_n/(W_n[T]', \N, 1\mapsto T)}\arrow[r]&
0
\end{tikzcd}
\end{equation}
which is obtained by tensoring $\Of_{Y_n\langle\rangle}$ with the $i$-th wedge power of the locally split short exact sequence \[
\begin{tikzcd}
0\arrow[r]&
\Of_{Y_n}\arrow[r, "\cdot d\log T"]&
\omega^1_{Y_n/(W_n, (0))}\arrow[r]&
\omega^1_{Y_n/(W_n[T]', \N, 1\mapsto T)}\arrow[r]&
0
\end{tikzcd}
\]

\

Now we claim that the following diagram commutes and the three vertical arrow are isomorphisms :

\begin{tikzcd}[column sep=0.5em]
\label{dr}
\dR_{Y_n\langle\rangle/(W_n\langle T\rangle', \N, 1\mapsto T)}\otimes^\Ld_{W_n\langle T\rangle'}W_n[-1]\arrow[r, "\cdot dlog T"]\arrow[d]
&C_{\overline{Y_1}, Y_n/(W_n, (0))}\otimes^\Ld_{W_n\langle T\rangle'}W_n\arrow[r]\arrow[d]
&\dR_{Y_n\langle\rangle/(W_n\langle T\rangle', \N, 1\mapsto T)}\otimes^\Ld_{W_n\langle T\rangle'}W_n\arrow[d]\\
\dR_{\overline{Y_n}/(W_n, \N, 1\mapsto 0)}[-1]\arrow[r, "\cdot dlog 1"]
&\dR_{\overline{Y_n}/(W_n, (0))}\arrow[r]
&\dR_{\overline{Y_n}/(W_n, \N, 1\mapsto 0)}
\end{tikzcd}

The commutativity is straightforward. The isomorphisms of left and right column just follows form the locally freeness of each modules of differentials and the pullback result Lemma \ref{pback} for modules of differentials.

For the middle vertical arrow, we know that $(\Of_{Y_n\langle\rangle}\otimes_{\Of_{Y_n}}\omega^i_{Y_n/(W_n, (0))})\otimes_{W_n\langle T\rangle'}W_n\cong \omega^i_{\overline{Y_n}/(W_n, (0))}$ and since $\omega^i_{Y_n/(W_n, (0))}\rightarrow \omega^i_{\overline{Y_n}/(W_n, (0))}$ is already surjective and we know the $\nabla$ in $C_{\overline{Y_1}, Y_n/(W_n, (0))}$ satisfy Leibniz rules and hence the differential applied to the image of  $\omega^i_{Y_n/(W_n, (0))} $ in $\Of_{Y_n\langle\rangle}\otimes_{\Of_{Y_n}}\omega^i_{Y_n/(W_n, (0))}$ is just log differential, we see that the differentials in the complex after tensoring are exactly the log de-Rham differentials.                  

Thus, by the last part of Proposition \ref{3.11}, we see that the operator $N$ on \linebreak $H^{i}_{crys}((\overline{Y_1}, \overline{N_1})/(W_n, \N), \Of_{\overline{Y_1}/W_n})$ is given by the degree $i$ connecting homomorphism of the long exact sequence associated to the lower row of the above diagram. Thus (i) is proved.

\

Now we prove (ii).

By proper base change theorem in the context of derived category of coherent sheaves(Section 7.7 of \cite{EGA}) applied to the Cartesian square 
\[
\begin{tikzcd}
\overline{Y_n}\arrow[r, "i_n"]\arrow[d, "\pi_n"]&\overline{Y}\arrow[d, "\pi_\infty"]\\
W_n\arrow[r]&W
\end{tikzcd}
\]
and the exact triangle 
\begin{equation}
\label{lastpage}
\begin{tikzcd}
\dR_{\overline{Y}/(W, \N, 1\mapsto 0)}[-1]\arrow[r, "\cdot dlog 1"]
&\dR_{\overline{Y}/(W, (0))}\arrow[r]
&\dR_{\overline{Y}/(W, \N, 1\mapsto 0)}\arrow[r]
&{}
\end{tikzcd}
\end{equation}

,we see that 
\[
\begin{tikzcd}[column sep=0.5em]
R\pi_{\infty, \ast}\dR_{\overline{Y}/(W, \N, 1\mapsto 0)}\otimes^{\Ld}_{W}W_n[-1]\arrow[r]\arrow[d, "\cong"]&
R\pi_{\infty, \ast}\dR_{\overline{Y}/(W, (0))}\otimes^{\Ld}_{W}W_n\arrow[r]\arrow[d, "\cong"]&
R\pi_{\infty, \ast}\dR_{\overline{Y}/(W, \N, 1\mapsto 0)}\otimes^{\Ld}_{W}W_n\arrow[d, "\cong"]\arrow[r]&{}\\
R\pi_{n, \ast}\dR_{\overline{Y_n}/(W_n, \N, 1\mapsto 0)}[-1]\arrow[r]& 
R\pi_{n, \ast}\dR_{\overline{Y_n}/(W_n, (0))}\arrow[r]& 
R\pi_{n, \ast}\dR_{\overline{Y_n}/(W_n, \N, 1\mapsto 0)}\arrow[r]&{}
\end{tikzcd}
\]
(Note that each term of the de Rham complexes in \ref{lastpage} (as actual chain complexes) is locally free by the log smoothness and that $L i_n^\ast \dR_{\overline{Y}/(W, \N, 1\mapsto 0)}=\dR_{\overline{Y_n}/(W_n, \N, 1\mapsto 0)}$, $L i_n^\ast\dR_{\overline{Y}/(W, (0))}=\dR_{\overline{Y_n}/(W_n, (0))}$ by applying usual pullback at each degree and \ref{pback}).

Applying $R\pi_{\infty, \ast}\dR_{\overline{Y}/(W, \N, 1\mapsto 0)}\otimes^{\Ld}_{W}$ to the exact triangle 
\[
\begin{tikzcd}
W\arrow[r, "\cdot p^n"]&
W\arrow[r]&
W_n\arrow[r]&
{}
\end{tikzcd}
\]
and then taking the long exact sequence, we get a short exact sequence 
\begin{equation}
\label{drseq}
\begin{tikzcd}[column sep=1em]
0\arrow[r]&
\Hh^{i}(\dR_{\overline{Y}/(W, \N, 1\mapsto 0)})/p^n\arrow[r]& \Hh^{i}(\dR_{\overline{Y_n}/(W_n, \N, 1\mapsto 0)})\arrow[r]&
\Hh^{i+1}(\dR_{\overline{Y}/(W, \N, 1\mapsto 0)})[p^n]\arrow[r]&
0
\end{tikzcd}
\end{equation}
where the inclusion is $N$-equivariant since it is induced by the above diagram from the connecting homomorphisms of the diagram 
\[
\begin{tikzcd}[column sep=0.5em]
R\pi_{\infty, \ast}\dR_{\overline{Y}/(W, \N, 1\mapsto 0)}\otimes^{\Ld}_{W}W[-1]\arrow[r]\arrow[d]&
R\pi_{\infty, \ast}\dR_{\overline{Y}/(W, (0))}\otimes^{\Ld}_{W}W\arrow[r]\arrow[d]&
R\pi_{\infty, \ast}\dR_{\overline{Y}/(W, \N, 1\mapsto 0)}\otimes^{\Ld}_{W}W\arrow[d]\arrow[r]&{}\\
R\pi_{\infty, \ast}\dR_{\overline{Y}/(W, \N, 1\mapsto 0)}\otimes^{\Ld}_{W}W_n[-1]\arrow[r]&
R\pi_{\infty, \ast}\dR_{\overline{Y}/(W, (0))}\otimes^{\Ld}_{W}W_n\arrow[r]&
R\pi_{\infty, \ast}\dR_{\overline{Y}/(W, \N, 1\mapsto 0)}\otimes^{\Ld}_{W}W_n\arrow[r]&{}
\end{tikzcd}
\]

Applying $R\pi_{\infty, \ast}\dR_{\overline{Y}/(W, \N)}\otimes^{\Ld}_{W}$ to the morphism between exact triangles
\[
\begin{tikzcd}
W\arrow[r, "\cdot p^{n+1}"]\arrow[d, "\cdot p"]&
W\arrow[r]\arrow[d, "="]&
W_{n+1}\arrow[r]\arrow[d]&
{}\\
W\arrow[r, "\cdot p^n"]&
W\arrow[r]&
W_n\arrow[r]&
{}
\end{tikzcd}
\]

and take long exact sequence will give us the desired commutative diagram 

\begin{equation}
\label{conn}
\begin{tikzcd}[column sep=0.5em]
0\arrow[r]&
\Hh^{i}(\dR_{\overline{Y}/(W, \N, 1\mapsto 0)})/p^{n+1}\arrow[r]\arrow[d]& \Hh^{i}(\dR_{\overline{Y_{n+1}}/(W_{n+1}, \N, 1\mapsto 0)})\arrow[r]\arrow[d]&
\Hh^{i+1}(\dR_{\overline{Y}/(W, \N, 1\mapsto 0)})[p^{n+1}]\arrow[r]\arrow[d, "\cdot p"]&
0\\
0\arrow[r]&
\Hh^{i}(\dR_{\overline{Y}/(W, \N, 1\mapsto 0)})/p^{n}\arrow[r]& \Hh^{i}(\dR_{\overline{Y_{n}}/(W_n, \N, 1\mapsto 0)})\arrow[r]&
\Hh^{i+1}(\dR_{\overline{Y}/(W, \N, 1\mapsto 0)})[p^{n}]\arrow[r]&
0
\end{tikzcd}
\end{equation}

in (ii) where the left commutative square is $N$ equivariant. Thus (ii) is proved.

\ 

To prove (iii), take inverse limit over the short exact sequences in \ref{conn} indexed by $n$, we get
\[
\begin{tikzcd}[column sep=0.5em]
0\arrow[r]&
\varprojlim_{n}\Hh^{i}(\dR_{\overline{Y}/(W, \N, 1\mapsto 0)})/p^n\arrow[r]& \varprojlim_{n}\Hh^{i}(\dR_{\overline{Y_n}/(W_n, \N, 1\mapsto 0)})\arrow[r]&
\varprojlim_{\cdot p}\Hh^{i+1}(\dR_{\overline{Y}/(W, \N, 1\mapsto 0)})[p^n]
\end{tikzcd}
\]
where the inclusion is $N$ equivariant.

Note that all $\Hh^{j}(\dR_{\overline{Y}/(W, \N, 1\mapsto 0)})$ are finitely generated over $W$ (by truncation and that proper pushforward preserve coherent sheaves). So \linebreak $\varprojlim_{n}\Hh^{i}(\dR_{\overline{Y}/(W, \N, 1\mapsto 0)})/p^n\cong \Hh^{i}(\dR_{\overline{Y}/(W, \N, 1\mapsto 0)})$ and \linebreak $\varprojlim_{\cdot p}\Hh^{i+1}(\dR_{\overline{Y}/(W, \N, 1\mapsto 0)})[p^n]=0$. Thus (iii) is proved.

\end{proof}

We would like to prove another GAGA type result.

Let $(X, M)$ be a log scheme smooth over $(\C, \N, 1\mapsto 0)$ and proper as a scheme. Let $(X^\an, M^\an)$ be the analytic log scheme associated to it which is also smooth over the analytic point $(\pt, \N, 1\mapsto 0)$.

\begin{proposition}

There exists an $N$ equivariant isomorphism
\[
\Hh^{i}(\dR_{X/(\C, \N, 1\mapsto 0)})\cong \Hh^{i}(\dR_{X^\an/(\pt, \N, 1\mapsto 0)})
\]
where the $N$ on each hypercohomology is defined by degree $i$ boundary morphism of the exact triangles obtained similar to the one in Theorem \ref{criscp}

\begin{tikzcd}
\dR_{X/(\C, \N, 1\mapsto 0)}[-1]\arrow[r, "\cdot dlog 1"]
&\dR_{X/(\C, (0))}\arrow[r]
&\dR_{X/(\C, \N, 1\mapsto 0)}\arrow[r]
&{}
\end{tikzcd}

\begin{tikzcd}
\dR_{X^\an/(\pt, \N, 1\mapsto 0)}[-1]\arrow[r, "\cdot dlog 1"]
&\dR_{X^\an/(\pt,(0))}\arrow[r]
&\dR_{X^\an/(\pt, \N, 1\mapsto 0)}\arrow[r]
&{}
\end{tikzcd}

\end{proposition}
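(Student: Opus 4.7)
The plan is to deduce the isomorphism from Serre's GAGA applied term by term to the log de Rham complexes, and then verify that the $N$-operators are intertwined by naturality of the connecting homomorphism.

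First, I observe that $(\C, \N, 1\mapsto 0)$ is log smooth over $(\C, (0))$ by the smoothness criterion (2.9) in \cite{HK}, so $(X, M)$ is log smooth over both $(\C, \N, 1\mapsto 0)$ and $(\C, (0))$. Consequently each sheaf $\omega^p_{X/(\C, \N, 1\mapsto 0)}$ and $\omega^p_{X/(\C, (0))}$ is a locally free coherent $\Of_X$-module of finite rank. Using Lemma \ref{pback} for the Cartesian square arising from analytification, together with the compatibility of log differential modules with this base change (which reduces to a chart computation), the analytification of each of these sheaves is identified with the corresponding analytic log differential sheaf on $X^\an$.

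Second, since $X$ is proper over $\C$, Serre's GAGA gives a natural $\C$-linear isomorphism $H^q(X, \omega^p_{X/(\C, \N, 1\mapsto 0)}) \cong H^q(X^\an, \omega^p_{X^\an/(\pt, \N, 1\mapsto 0)})$ for every $p, q$, and similarly with $(\C, \N, 1\mapsto 0)$ replaced by $(\C, (0))$. I would then use the first hypercohomology spectral sequence
\[
E_1^{p, q} = H^q(X, \omega^p_{X/(\C, \N, 1\mapsto 0)}) \Rightarrow \Hh^{p+q}(\dR_{X/(\C, \N, 1\mapsto 0)})
\]
and its analytic analogue. The $d_1$-differential is induced by applying sheaf cohomology to the $\C$-linear operator $d : \omega^p \to \omega^{p+1}$, and since analytification commutes with $d$, the GAGA isomorphisms on $E_1$-terms intertwine $d_1$. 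Both spectral sequences are finite and convergent, so we obtain the desired isomorphism $\Hh^i(\dR_{X/(\C, \N, 1\mapsto 0)}) \cong \Hh^i(\dR_{X^\an/(\pt, \N, 1\mapsto 0)})$; the same argument handles the $(\C, (0))$-version.

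Third, for $N$-equivariance the exact triangles defining $N$ on each side arise by taking $i$-th wedge powers, as in Theorem \ref{criscp}, of the locally split short exact sequence of locally free coherent sheaves
\[
0 \to \Of_{X} \xrightarrow{\ \cdot d\log 1\ } \omega^1_{X/(\C, (0))} \to \omega^1_{X/(\C, \N, 1\mapsto 0)} \to 0
\]
and its analytic analogue. Analytification is exact on coherent sheaves and preserves wedge powers, so these exact triangles on $X$ and $X^\an$ are identified term by term via the GAGA comparison built above; naturality of the connecting homomorphism then forces the corresponding $N$-operators to agree. The main obstacle I anticipate is the compatibility of the GAGA isomorphism with the non-$\Of_X$-linear log de Rham differential; routing everything through the hypercohomology spectral sequence reduces the problem to $\C$-linear maps of coherent sheaf cohomologies where GAGA applies directly, but the bookkeeping needed to promote this into a comparison of the entire exact triangle defining $N$ is the step that requires the most care.
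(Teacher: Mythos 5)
Your proposal is correct and takes essentially the same approach as the paper: apply Serre's GAGA after identifying the analytification of the algebraic log de Rham complex termwise with the analytic one, and obtain $N$-equivariance by identifying the entire defining exact triangle term by term, then invoking naturality of the connecting map. The only difference is that you make explicit the Hodge-to-de Rham spectral sequence step used to pass from GAGA on $H^q(X,\omega^p)$ to GAGA on hypercohomology (and the compatibility of the comparison map with the merely $\C$-linear differential $d$), whereas the paper treats this passage implicitly and instead spells out the quotient presentation of $\omega^1$ used to verify $\lambda^\ast\omega^1_{(X,M)/(Z,L)}\cong\omega^1_{(X^\an,M)/(Z^\an,L)}$, which you defer to a chart computation.
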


\begin{proof}
We can apply the exact GAGA functor $\mathcal{F}\mapsto \mathcal{F}^{\mathrm{an}}$ from the category of sheaves of $\Of_{X}$ modules to the category of sheaves of $\Of_{X^\an}$ modules. Let $\lambda$ be the morphism $X^\an\rightarrow X$. Thus it suffices to show there is a canonical isomorphism between 

\begin{tikzcd}
\lambda^\ast\dR_{X/(\C, \N, 1\mapsto 0)}[-1]\arrow[r, "\cdot dlog 1"]\arrow[d, "\cong"]
&\lambda^\ast\dR_{X/(\C, (0))}\arrow[r]\arrow[d, "\cong"]
&\lambda^\ast\dR_{X/(\C, \N, 1\mapsto 0)}\arrow[r]\arrow[d, "\cong"]
&{} \\
\dR_{X^\an/(\pt, \N, 1\mapsto 0)}[-1]\arrow[r, "\cdot dlog 1"]
&\dR_{X^\an/(\pt, (0))}\arrow[r]
&\dR_{X^\an/(\pt, \N, 1\mapsto 0)}\arrow[r]
&{}
\end{tikzcd}
    
Commutativity of diagram is clear from the definition of $log 1$ as the image of $1$ of the monoid in the differential modules. The isomorphism of complexes actually holds termwise. Namely, we could prove the stronger statement that for any log scheme $(X, M)/(Z, L)$ over $\C$, if we denote its analytification by $(X^\an, M)/(Z^\an, L)$ then $\lambda^\ast\omega^i_{(X, M)/(Z, L)}\cong \omega^i_{(X^\an, M)/(Z^\an, L)}$ for any non-negative integer $i$. It suffices to prove for $i=1$.  We know from definition that $\omega^1_{(X, M)/(Z, L)}$ is $\Omega^1_{X/Z}\oplus M\otimes\Of_X$ modulo the relation $di(m)\oplus-m\otimes i(m)$ and $0\oplus l\otimes 1$ for all $m\in M$ and $l\in L$ where $i$ is the structure morphism $M\rightarrow \Of_X$. Similarly $\omega^1_{(X^\an, M)/(Z^\an, L)}$ is $\Omega^1_{X^\an/Z^\an}\oplus M\otimes\Of_{X^\an}$ modulo the relation $di(m)\oplus-m\otimes i(m)$ and $0\oplus l\otimes 1$ for all $m\in M$ and $l\in L$ where $i$ is the structure morphism $M\rightarrow \Of_{X^\an}$. We conclude the case $i=1$ by noting the fact that $\lambda^\ast \Omega^1_{X/Z}\cong\Omega^1_{X^\an/Z^\an}$.

\end{proof}

We apply this proposition to $(X, M)=(\overline{Y}_\C, \overline{N}_\C)$ and its analytification $(\overline{Y^\an}, \overline{N^\an})$. Since $Z$ is a blowup of the projective variety $\xxx_{de}$(Under the notation of previous section), as the base change of $Z$ along $W[T]'\rightarrow \C$, $T\mapsto 0$, $\overline{Y}_\C$ is projective as well. Hence the condition of the proposition is satisfied and we may reduce the computation of the operator $N$ to the analytic setting.

\section{Proof of Theorem \ref{mt}}
Recall the hypothesis that $F$ is a characteristic $0$ local field containing $\zeta_N$ whose residue characteristic equals to $p$ and does not divide $N$.

It suffices to prove that $V_{\lambda, t^{-1}}$ is regular ordinary as a $G_{F'}$ representation for a finite extension $F'/F$ because of the following lemma.

\begin{lemma}
If a representation $r: G_F\rightarrow GL_n(\overline{\Q}_p)$ satisfy that its restriction $r|_{G_{F'}}$ is de Rham regular and ordinary for some finite extension $F'/F$, then the same holds for $r$.
\end{lemma}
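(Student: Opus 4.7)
The plan is to descend each component of the hypothesis --- de Rham, regular, and ordinary --- separately from $G_{F'}$ to $G_F$. First I would replace $F'$ by its Galois closure $F''$ over $F$, noting that $r|_{G_{F''}}$ inherits the de Rham regular ordinary property since $G_{F''} \subset G_{F'}$ and restricting the ordinary filtration for $G_{F'}$ to $G_{F''}$ produces an ordinary filtration. This reduces us to assuming $F'/F$ is finite Galois. That de Rham-ness and the tuple of Hodge--Tate weights are invariants of the underlying $G_F$-representation (not depending on whether we view $r$ or $r|_{G_{F'}}$) is standard Galois descent for the functors $D_{\mathrm{dR}}$ and $D_{\mathrm{HT}}$: $(B_{\mathrm{dR}} \otimes V)^{G_F} = ((B_{\mathrm{dR}} \otimes V)^{G_{F'}})^{\gal(F'/F)}$, and a similar assertion holds on dimensions. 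So $r$ is de Rham and regular immediately.

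The substantive step is producing a $G_F$-stable ordinary filtration. Let $0 = V_0 \subset V_1 \subset \cdots \subset V_n = V$ be the $G_{F'}$-stable ordinary filtration, with $V_i/V_{i-1}$ of Hodge--Tate weight $a_i$, the $a_i$ distinct by regularity. I claim $V_i$ is the \emph{unique} $G_{F'}$-stable subspace of $V$ of dimension $i$ whose set of Hodge--Tate weights is exactly $\{a_1, \ldots, a_i\}$. Indeed, if $W$ were another such subspace, the composition $W \hookrightarrow V \twoheadrightarrow V/V_i$ would be a map of $G_{F'}$-representations whose source has Hodge--Tate weights $\{a_1, \ldots, a_i\}$ and whose target has Hodge--Tate weights $\{a_{i+1}, \ldots, a_n\}$, disjoint sets. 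Filtering the target by its induced ordinary filtration and inducting on length, a nonzero such map would eventually yield a nonzero homomorphism between one-dimensional characters of $G_{F'}$ whose Hodge--Tate weights differ; but a nonzero hom between characters forces coincidence and hence equality of weights, a contradiction. Hence $W \subseteq V_i$ and $\dim W = i$ forces $W = V_i$.

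With uniqueness in hand, the descent is automatic: for any $g \in G_F$, the subspace $gV_i$ is $G_{F'}$-stable (as $G_{F'}$ is normal in $G_F$), and its Hodge--Tate weight set as a $G_{F'}$-representation agrees with that of $V_i$ by functoriality of the Hodge--Tate decomposition. Uniqueness gives $gV_i = V_i$, so $V_\bullet$ is $G_F$-stable. Finally, the $G_F$-character on $V_i/V_{i-1}$ agrees with $\chi_{\mathrm{cyc}}^{a_i}$ on $I_{F'}$ up to finite-order twist (from the $G_{F'}$-ordinary hypothesis), so on all of $I_F$ it agrees with $\chi_{\mathrm{cyc}}^{a_i}$ up to a possibly additional tame twist of order dividing $[I_F : I_{F'}]$, which is permitted by the definition of ordinary. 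The main obstacle is the uniqueness argument in the second paragraph: it hinges crucially on regularity to make the Hodge--Tate weight sets across graded pieces disjoint, since without regularity nontrivial $G_{F'}$-endomorphism rings of repeated characters would obstruct uniqueness and we would have to argue with more delicate invariants.
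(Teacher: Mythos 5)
Your overall strategy---reduce to $F'/F$ Galois, prove the ordinary filtration is the unique $G_{F'}$-stable filtration with prescribed Hodge--Tate data, then use normality of $G_{F'}$ in $G_F$ to deduce $G_F$-stability---is in the same spirit as the paper's proof, which also works by showing the conjugate line $L_1 = \overline{\Q}_p\,\sigma e_1$ must coincide with $V_1$. The difference is packaging: the paper folds the two steps together, while you separate them into a clean uniqueness lemma plus an appeal to ``functoriality.''

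However, there is a genuine gap in the second step, and it is precisely the place where the paper does its real work. You treat the Hodge--Tate weights $a_i$ of the graded pieces as single numbers, but since $F'$ is a genuine finite extension of $\Q_p$, the weights of a $G_{F'}$-character form a tuple $(\mu_{\tau,i})_{\tau}$ indexed by embeddings $\tau\colon F'\hookrightarrow\overline{\Q}_p$. For $g\in G_F$ with image $\sigma\in\gal(F'/F)$, the $\tau$-Hodge--Tate weight of $gV_i$ is $\mu_{\tau\sigma,i}$, not $\mu_{\tau,i}$: what ``functoriality of the Hodge--Tate decomposition'' actually gives you is that conjugating by $g$ permutes the embedding-indexing by $\sigma$. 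So the assertion that $gV_i$ and $V_i$ have ``the same Hodge--Tate weight set'' is exactly the nontrivial identity $\mu_{\tau,i}=\mu_{\tau\sigma,i}$, which the paper establishes (using the strict orderings $\mu_{\tau,1}<\cdots<\mu_{\tau,n}$ together with the vanishing of Homs between characters of distinct weights) rather than deducing formally. Without this, your uniqueness claim is applied with an unverified hypothesis. The gap is fixable---for instance, the multiset $\{\mu_{\tau,j}\colon j\}$ of $\tau$-weights of the full space $V=r|_{G_{F'}}$ depends only on $\tau|_F$ because $V$ is a restricted $G_F$-representation, and regularity lets you match the sorted lists for $\tau$ and $\tau\sigma$ to recover $\mu_{\tau,i}=\mu_{\tau\sigma,i}$---but some argument of this kind is required and is missing from your write-up. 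Relatedly, the final paragraph about a ``tame twist of order dividing $[I_F:I_{F'}]$'' is not the right picture: the criterion in Theorem 6.1.2 of \cite{tap} only asks for agreement with $\sigma\mapsto\prod_\tau\tau(\mathrm{Art}^{-1}(\sigma))^{\mu_{\tau,i}}$ on an open subgroup of inertia, and $I_{F'}$ is already open in $I_F$; the point that actually needs checking there is again that the exponents $\mu_{\tau',i}$ are constant on embeddings $\tau'$ of $F'$ restricting to a common $\tau$ of $F$, i.e.\ the same $\sigma$-invariance.
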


\begin{proof}
We may assume without loss of generality that $F'/F$ is Galois. By the interpretation of the regular and ordinary condition as in Theorem 6.1.2 of \cite{tap}, we see that 
\[
r|_{G_{F'}}\sim \left(
\begin{matrix}
\psi_{1}&\ast&\ast&\ast\\
0&\psi_{2}&\ast&\ast\\
\vdots&\ddots&\ddots&\ast\\
0&\cdots&0&\psi_{n}
\end{matrix}
\right)
\]
where for each $i=1,\ldots,n$  the character $\psi_{i}$ : $G_{F'}\rightarrow\overline{\mathbb{Q}}_{p}^{\times}$  agrees with the character

$$
\sigma\in I_{F'}\mapsto\prod_{\tau\in \mathrm{H}\mathrm{o}\mathrm{m}(F',\overline{\mathbb{Q}}_{p})}\tau(\mathrm{Art}_{F'}^{-1}(\sigma))^{\mu_{\tau, i}}
$$
on an open subgroup of the inertia group $I_{L}$, for some tuple $\mu_{\tau, i}$ satisfying for each fixed $\tau$, $\mu_{\tau, 1}<\cdots<\mu_{\tau, n}$. It suffices to show the $G_F$(actually $\gal(F'/F)$) action preserve the filtration given by the above triangular form and $\mu_{\tau, i}=\mu_{\tau\sigma, i}$ for any $\tau\in\mathrm{Hom}(F', \overline{\Q}_p)$ and $\sigma\in\gal(F'/F)$.  

Take a $\sigma\in\gal(F'/F)$, consider the one dimensional subspace $\overline{\Q}_pe_1$ underlying $\psi_1$ and the subspace $L_1=\overline{\Q}_p\sigma e_1$. $G_{F'}$ acts on $L_1$ with the $\tau$ HT weight $-\mu_{\tau\sigma, 1}$. Now if there exists some $\tau$ such that $\mu_{\tau, 1}\neq\mu_{\tau\sigma, 1}$, there must exist some $\tau$ such that $\mu_{\tau\sigma, 1}<\mu_{\tau, 1}$, thus the $\tau$ HT weights of $L_1$ is strictly greater than the $\tau$ HT weights of $\psi_i$ for any $i=1,\ldots, n$(Following from the strict ordering of the various $\mu_{\tau, i}$). From this we see $\mathrm{Hom}(L_1. \psi_i)=0$  for any $i=1,\ldots, n$(otherwise their $\tau$ HT weights must be same for all $\tau$) and thus $\mathrm{Hom}(L_1, V)=0$ where $V$ denotes the underlying vector space of $r$, which is a contradiction.

Thus $\mu_{\tau, 1}=\mu_{\tau\sigma, 1}$ for any $\tau\in\mathrm{Hom}(F', \overline{\Q}_p)$ and $\sigma\in\gal(F'/F)$. By similar argument, we still have $\mathrm{Hom}(L_1. \psi_i)=0$  for any $i=2,\ldots, n$, hence $\mathrm{Hom}(L_1, V)=\mathrm{Hom}(L_1, \psi_1)$. The left hand side is nonzero and $L_1$ and the vector space underlying $\psi_1$ are both of $1$ dimensional, thus we must have $\sigma$ preserve the $1$ dimensional vector space underlying $\psi_1$. We proved that the first step of the filtration is preserved under $\gal(F'/F)$ action. Quotienting out the subspace underlying $\psi_1$, we can use the same argument to deduce the rest of the claim.

\end{proof}

Note that the $F'/F$ we will use is totally ramified, hence the residue field is still $k$.

The following $p$-adic Hodge theoretic lemma will be used.

\begin{lemma}
Suppose that $F_v$ is a characteristic $0$ local field over $\Q_p$ and
$$
r: \gal(\overline{F_v}/F_v)\rightarrow GL_{n}(\overline{\mathbb{Q}}_{p})
$$
is semi-stable. Suppose moreover that for every $\tau:F_v^0\inj \overline{\Q}_p$, the operator $N$ on the $n$-dimensional vector space
\[
(r\otimes_{\tau, F_v^0}B_{\st})^{\mathrm{Gal}(\overline{F}_v/F_v)}
\]
is maximally nilpotent, i.e. the smallest $j$ such that $N^j=0$ is $n$. Then $r$ is regular and ordinary.
\end{lemma}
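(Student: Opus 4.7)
The plan is to translate via Fontaine's equivalence into the category of weakly admissible filtered $(\varphi,N)$-modules, extract a canonical flag from the Jordan-block structure of $N$, and then descend that flag to a Galois-stable filtration of $r$ exhibiting the ordinary structure required by Theorem 6.1.2 of \cite{tap}.

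First I would set $D=D_{\st}(r)$, a filtered $(\varphi,N)$-module, free of rank $n$ over $F_v^0\otimes_{\Q_p}\overline{\Q}_p\cong\prod_\tau\overline{\Q}_p$. The hypothesis translates to saying that on each factor $D_\tau$ the operator $N$ is a single Jordan block of size $n$. Consequently the monodromy filtration $D^{(i)}:=N^{n-i}D$ for $0\le i\le n$ is tautologically $N$-stable, and the commutation relation $N\varphi=p\varphi N$ yields $\varphi(N^{j}x)=p^{-j}N^{j}\varphi(x)$, so it is $\varphi$-stable as well. Fixing some $x\in D$ with $N^{n-1}x\ne 0$ and using the basis $\{N^{j}x\}_{j}$ on each $\tau$-component, a direct computation shows that on the one-dimensional graded piece $D^{(i+1)}/D^{(i)}$ the Frobenius has Newton slope exactly one less than on $D^{(i+2)}/D^{(i+1)}$; so the Newton slopes on the $n$ graded pieces are integers differing consecutively by $1$, hence strictly increasing.

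The crux is to show each $D^{(i)}$ is itself weakly admissible, so that $V_{\st}$ converts the flag into a $G_{F_v}$-stable flag of sub-representations of $r$. Weak admissibility of the whole $D$ supplies the inequality $t_H(D^{(i)})\le t_N(D^{(i)})$ for every $i$, with equality at $i=n$. Because every $(\varphi,N)$-stable sub-module of $D$ sits in the chain $\{D^{(i)}\}$ (the $N$-invariant subspaces of a single Jordan block are exactly its iterated images), the full weak-admissibility condition is controlled by this single chain; combined with the strict convexity of the Newton polygon (slopes differing by~$1$) and the coincidence of Newton and Hodge totals, one forces the partial Hodge polygon to match the partial Newton polygon, i.e.\ $t_H(D^{(i)})=t_N(D^{(i)})$ for every $i$. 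Applying $V_{\st}$ then produces the desired flag $0\subset V_1\subset\cdots\subset V_n=r$ of $G_{F_v}$-stable subspaces.

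Finally I would identify the graded pieces: each $V_i/V_{i-1}$ corresponds via $V_{\st}$ to the one-dimensional weakly admissible $(\varphi,N)$-module $D^{(i)}/D^{(i-1)}$, on which $N$ vanishes, so the quotient is crystalline and its character $\psi_i$ has Hodge--Tate weights along each $\tau$ equal to the Newton slope on the corresponding graded piece. These weights are strictly increasing by one-unit steps in $i$, which gives regularity, and the crystalline form (unramified twist of an integer power of the cyclotomic character along each $\tau$) is precisely the inertial shape demanded by Theorem 6.1.2 of \cite{tap}, so $r$ is regular ordinary. The main obstacle is the weak-admissibility step, since WA is not automatically inherited by sub-modules; the argument must genuinely exploit the rigidity coming from the Newton slopes being separated by exactly~$1$ together with the fact that the monodromy filtration is the only $(\varphi,N)$-stable flag available.
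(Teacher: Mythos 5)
The paper's ``proof'' of this lemma is simply a citation: ``This is Lemma 2.2(2) of \cite{BLGHT}.'' You instead attempt a direct argument via filtered $(\varphi,N)$-modules and the monodromy flag, which is a reasonable thing to try and, as far as the set-up goes (passing to $D = D_{\st}(r)$, noting that on each $\tau$-factor the $N$-stable subspaces form a single chain $D^{(i)} = N^{n-i}D$, that the chain is also $\varphi$-stable, and that $N\varphi = p\varphi N$ makes the Newton slopes on successive graded pieces differ by exactly $1$), the reductions are sound.

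However, the step you yourself flag as ``the crux'' --- that $t_H(D^{(i)}) = t_N(D^{(i)})$ is \emph{forced} by weak admissibility of $D$ together with the slope gaps of $1$ and the matching totals --- is a genuine gap, and in fact that implication is false under only the stated hypotheses. Weak admissibility of $D$ gives only the one-sided inequality $t_H(D^{(i)}) \le t_N(D^{(i)})$; nothing in the slope structure or in convexity reverses it. Concretely, take $F_v = \Q_p$, $n=2$, $D = \overline{\Q}_p e_1 \oplus \overline{\Q}_p e_2$ with $Ne_2 = e_1$, $\varphi e_1 = p^{1/2}e_1$, $\varphi e_2 = p^{3/2}e_2$, and $\Fil^k D = D$ for $k\le 0$, $=\overline{\Q}_p(e_1+e_2)$ for $1\le k\le 2$, $=0$ for $k\ge 3$. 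Then $t_H(D) = 0 + 2 = 2 = t_N(D)$, the only $(\varphi,N)$-stable line is $\overline{\Q}_p e_1$ with $t_H = 0 \le 1/2 = t_N$, so $D$ is weakly admissible, $N$ is maximally nilpotent, and the Hodge--Tate weights $\{0,2\}$ are distinct (regular); yet $D^{(1)} = \overline{\Q}_p e_1$ has $t_H = 0 < 1/2 = t_N$, is not weakly admissible, hence $V_{\st}(D)$ has no Galois-stable line and is not ordinary. What is actually needed is the extra input that the $\tau$-Hodge--Tate weights are the \emph{consecutive} integers $0,1,\ldots,n-1$ (as indeed holds for the Dwork motives to which the paper applies this). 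With that hypothesis one has the elementary lower bound $t_H(D^{(i)}) \ge 0 + 1 + \cdots + (i-1)$ (coming from $\dim(D^{(i)}\cap\Fil^k) \ge i + \dim\Fil^k - n$), forcing $s=0$ and matching $t_N(D^{(i)}) = i(i-1)/2$ from above, which sandwiches $t_H(D^{(i)}) = t_N(D^{(i)})$. Without this extra hypothesis on the Hodge polygon, the conclusion fails, so the argument cannot be completed as written.
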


\begin{proof}
This Lemma 2.2 (2) of \cite{BLGHT}.
\end{proof}

Thus it suffices to prove that the operator $N$ on the space $D_{st, \sigma}(V_{\lambda, t^{-1}}):= (V_{\lambda, t^{-1}}\otimes_{\sigma, (F')^0}B_{\st})^{\mathrm{Gal}(\overline{F'}/F')}$ is maxinally nilpotent, where $(F')^0$ is the maximally unramified subextension  of $F'$ and $\sigma: (F')^0\inj \overline{\Q(\zeta_N)}_\lambda$ is any fixed embedding of  $(F')^0$ and $\lambda$ is a place  of $\Q(\zeta_N)$ over $p$. 
Applying the main comparison theorem of \cite{Tsuji} to the semistable model $\Y$ we get from Theorem \ref{ssbp}, we have
\[
D_{st, \sigma}(V_{\lambda, t^{-1}})\cong H^{N-2}_{crys}((\overline{\Y}, \overline{\nn})/(W, \N, 1\mapsto 0))[\frac{1}{p}]^{H_0, \sigma^{-1}\chi}\otimes_{(F')^0, \sigma}\overline{\Q(\zeta_N)}_\lambda
\]
as filtered $(\phi, N)$-modules, where the right hand side is Hyodo-Kato's log cristalline cohomology.

Now apply Proposition \ref{ss} and Remark \ref{link} to get the variety $Y$ with $H_0$ action over $\spec W[T]$ and the remark gives that
\[
H^{N-2}_{crys}((\overline{\Y}, \overline{\nn})/(W, \N, 1\mapsto 0))[\frac{1}{p}]^{H_0, \sigma^{-1}\chi}\otimes_{(F')^0, \sigma}\overline{\Q(\zeta_N)}_\lambda\cong
\]
\[
H^{N-2}_{crys}((\overline{Y_1}, \overline{N_1})/(W, \N, 1\mapsto 0))[\frac{1}{p}]^{H_0, \sigma^{-1}\chi}\otimes_{(F')^0, \sigma}\overline{\Q(\zeta_N)}_\lambda
\]

So we only need to identify the limit 
\[
\varprojlim_{n}H^{N-2}(((\overline{Y_1}, \overline{N_1})/(W_n,\N, 1\mapsto 0))_{crys},\Of_{\overline{Y_1}/W_n})[\frac{1}{p}]
\]
and describe the operator $N$ on its $H_0$ eigenspace determined by the character $\sigma^{-1}\chi$.

Now by Theorem \ref{criscp} applied to the log scheme $(Y, N)$ where $Y$ is as above and $N$ is the log structure on it given by the divisor $T$ over $(W[T], \N, 1\mapsto T)$ and $i=N-2$, we may identify the above limit with the de Rham cohomology of $(\overline{Y}, \overline{N})/(W, \N, 1\mapsto 0)$. And it suffice to prove that the operator $N$ defined as the degree $N-2$ boundary homomorphism of the long exact sequence given by the following exact triangle
    
\begin{tikzcd}
\dR_{\overline{Y}/(W, \N, 1\mapsto 0)}[-1]\arrow[r, "\cdot dlog 1"]
&\dR_{\overline{Y}/(W, (0))}\arrow[r]
&\dR_{\overline{Y}/(W, \N, 1\mapsto 0)}\arrow[r]
&{}
\end{tikzcd}

is maximally nilpotent on the $\sigma^{-1}\chi$ eigenspace of the $H_0$ action.

This algebraic statement can be verified over $\C$. So let $(\overline{Y}_\C, \overline{N}_\C)$ denote the base change of $(\overline{Y}, \overline{N})$ along $(W, \N, 1\mapsto 0)\rightarrow(\C, \N, 1\mapsto 0)$ for some fixed embedding $\tau: W\rightarrow \C$. We are reduced to show that the $N$ on $\Hh^{N-2}(\dR_{\overline{Y}_\C/(\C, \N, 1\mapsto 0)})$ given as the degree $N-2$ boundary map of the long exact sequence given by the exact triangle

\begin{tikzcd}
\dR_{\overline{Y}_\C/(\C, \N, 1\mapsto 0)}[-1]\arrow[r, "\cdot dlog 1"]
&\dR_{\overline{Y}_\C/(\C, (0))}\arrow[r]
&\dR_{\overline{Y}_\C/(\C, \N, 1\mapsto 0)}\arrow[r]
&{}
\end{tikzcd}

is maximally nilpotent on the $\tau\sigma^{-1}\chi$ eigenspace of the $H_0$ action.

Let $(S, \N, 1\mapsto T)$ be the analytic disc centered at $0$ with radius $1$, with coordinate $T$ in $\C$ and $\pi^\an: Y^\an\rightarrow S$ be the analytic space associated to $Y$ pulled back to $S$ from the affine line. We have the log structure $N^\an$ on $Y^\an$ given by $N$ on $Y$. We write $(\overline{Y^\an}, \overline{N^\an})$ as the base change of $(Y^\an, N^\an)$ along $(\pt, \N, 1\mapsto 0)\rightarrow (S, \N, 1\mapsto T)$, which is clearly also the analytification of $(\overline{Y}_\C, \overline{N}_\C)$. Now we apply Theorem \ref{injty} to see there exists an $N$ and $H_0$ equivariant isomorphism
\[
\Hh^{N-2}(\dR_{\overline{Y}_\C/(\C, \N, 1\mapsto 0)})\cong \Hh^{N-2}(\dR_{\overline{Y^\an}/(\pt, \N, 1\mapsto 0)})
\]
We are reduced to prove that the operator $N$ on the $\sigma^{-1}\chi$ eigenspace of \linebreak $\Hh^{N-2}(\dR_{\overline{Y^\an}/(\pt, \N, 1\mapsto 0)})$ is maximally nilpotent.

\

Apply \cite{ill} 2.2.2 to the setting $X=Y^\an$, $S=S$ with their mentioned log structure.

\cite{ill} 2.2.2 says $R^{i}\pi^\an_{\ast}\dR_{Y^\an/(S, \N, 1\mapsto T)}$ is locally free for all $i$(in particular for $i=N-2$) and

\begin{equation}
\label{22}
\left(R^{N-2}\pi^\an_{\ast}\dR_{Y^\an/(S, \N, 1\mapsto T)}\right)\otimes_{\Of_S}\C_{\{0\}}\cong \Hh^{N-2}(\dR_{\overline{Y^\an}/(\pt, \N, 1\mapsto 0)})
\end{equation}

(Here, we identify the $\omega_Y^\cdot $ in \cite{ill} 2.2.2, which is defined there as the derived pullback of $\dR_{Y^\an/(S, \N, 1\mapsto T)}$ to the point $0$,  with $\dR_{\overline{Y^\an}/(\pt, \N, 1\mapsto 0)}$ by Lemma \ref{pback} and that by locally freeness we can pullback termwise for $\dR_{Y^\an/(S, \N, 1\mapsto T)}$.)

We claim this isomorphism \ref{22} is $N$ equivariant, where the $N$ on the left hand side is the reduction of the Gauss-Manin connection 
\[
R^{N-2}\pi^\an_{\ast}\dR_{Y^\an/(S, \N, 1\mapsto T)}\rightarrow R^{N-2}\pi^\an_{\ast}\dR_{Y^\an/(S, \N, 1\mapsto T)}\otimes_{\Of_S}\omega^1_{(S, \N, 1\mapsto T)/\C}
\]
that is (see 2.2.1.2 and 2.2.1.3 of \cite{ill}) the degree $N-2$ connecting homomorphism of the following exact triangle (the definition is completely similar to that in step (2) of section 3) when applying $R\pi^\an_{\ast}$, if we identify $\omega^1_{(S, \N, 1\mapsto T)/\C}\cong \Of_S$ by $dlog T\mapsto 1$:

\begin{tikzcd}
\dR_{Y^\an/(S, \N, 1\mapsto T)}[-1]\arrow[r, "\cdot dlog T"]
&\dR_{Y^\an/(\pt, (0))}\arrow[r]
&\dR_{Y^\an/(S, \N, 1\mapsto T)}\arrow[r]
&{}
\end{tikzcd}   

and the $N$ on the right hand side is defined in step (2) of section 3 and is the operator we reduced to calculate.

Granting the following lemma, we are reduced to calculate the residue of the Gauss-Manin connection at $0$ on the locally free sheaf $R^{N-2}\pi^\an_{\ast}\dR_{Y^\an/(S, \N, 1\mapsto T)}$, and that is linked to monodromy of this locally free sheaf by \cite{ill} (2.2.3). The proof of the lemma is quite formal, the reader is recommended to skip it.

\begin{lemma}
The above isomorphism \ref{22} is $N$-equivariant.
\end{lemma}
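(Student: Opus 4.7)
The plan is to exhibit both $N$ operators as degree-$(N-2)$ boundary maps of exact triangles that are related by pullback along $\{0\}\hookrightarrow S$, and then invoke proper base change.

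First, I would recall that each $N$ operator arises, via Illusie's recipe, from a short exact sequence of locally free sheaves. On $Y^\an$ this is the log cotangent sequence for $(Y^\an,N^\an)\to(S,\N,1\mapsto T)\to(\pt,(0))$, which after identifying $\omega^1_{(S,\N,1\mapsto T)/(\pt,(0))}\cong\Of_S$ via $d\log T\mapsto 1$ reads
\[
0\to\Of_{Y^\an}\xrightarrow{\,\cdot d\log T\,}\omega^1_{Y^\an/(\pt,(0))}\to\omega^1_{Y^\an/(S,\N,1\mapsto T)}\to 0.
\]
Taking $i$-th wedge powers and assembling produces the Gauss-Manin exact triangle whose degree-$(N-2)$ boundary, after $R\pi^\an_*$, is precisely the Gauss-Manin connection on $R^{N-2}\pi^\an_*\dR_{Y^\an/(S,\N,1\mapsto T)}$. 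On $\overline{Y^\an}$ the analogous cotangent sequence gives
\[
0\to\Of_{\overline{Y^\an}}\xrightarrow{\,\cdot d\log 1\,}\omega^1_{\overline{Y^\an}/(\pt,(0))}\to\omega^1_{\overline{Y^\an}/(\pt,\N,1\mapsto 0)}\to 0,
\]
whose associated triangle has degree-$(N-2)$ boundary equal to the $N$ on the right-hand side of \ref{22}.

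Next, I would verify that the second SES is the pullback of the first along the closed immersion $\overline{j}\colon(\overline{Y^\an},\overline{N^\an})\hookrightarrow(Y^\an,N^\an)$ coming from base change along $\{0\}\hookrightarrow S$. The left-hand term is trivially $\Of$-pullback; the right-hand term is Lemma \ref{pback} applied to the log-Cartesian square with bases $(\pt,\N,1\mapsto 0)\to(S,\N,1\mapsto T)$; for the middle term the naturality of the log cotangent sequence under this base change, combined with the five lemma and the identifications already established on the outer terms, forces the comparison map $\overline{j}^*\omega^1_{Y^\an/(\pt,(0))}\to\omega^1_{\overline{Y^\an}/(\pt,(0))}$ to be an isomorphism. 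Taking $i$-th wedge powers commutes with pullback because the SES is locally split, so we obtain a morphism of exact triangles identifying $\overline{j}^*$ of the Gauss-Manin triangle with the defining triangle of $N$ on $\overline{Y^\an}$, intertwining the two connecting maps.

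Finally, I would apply $R\pi^\an_*$ and invoke proper base change along $i\colon\{0\}\hookrightarrow S$. Properness of $\pi^\an$ holds because our algebraic $Y$ is projective over $\spec W[T]'$, and by \cite{ill} 2.2.2 every $R^q\pi^\an_*\dR_{Y^\an/(S,\N,1\mapsto T)}$ is locally free, so $Li^*$ collapses to ordinary reduction modulo $\mathfrak{m}_{S,0}$ and the base change morphism $Li^*R\pi^\an_*(-)\xrightarrow{\sim}R\overline{\pi^\an}_*\overline{j}^*(-)$ is an honest isomorphism of distinguished triangles. Passing to long exact sequences, the reduction at $0$ of the degree-$(N-2)$ Gauss-Manin boundary gets identified, under \ref{22}, with the degree-$(N-2)$ boundary computing $N$ on $\Hh^{N-2}(\dR_{\overline{Y^\an}/(\pt,\N,1\mapsto 0)})$. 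The main subtlety is the bookkeeping among the three log bases $(S,\N,1\mapsto T)$, $(S,(0))$, and $(\pt,(0))$ and verifying local freeness at every degree so that the various $L$-pullbacks do collapse to ordinary ones; once that is in place the argument is, as the author already indicates, formal.
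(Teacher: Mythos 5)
Your argument follows the paper's own route: you exhibit both $N$ operators as degree-$(N-2)$ boundary maps of the Gauss--Manin-type exact triangles, identify the reduction of the triangle on $Y^\an$ at $0$ with the triangle on $\overline{Y^\an}$ via the sheaf-level pullback identifications plus proper base change, and conclude $N$-equivariance by naturality of connecting maps. The only difference is cosmetic — you make the middle-term identification $\overline{j}^*\omega^1_{Y^\an/(\pt,(0))}\cong\omega^1_{\overline{Y^\an}/(\pt,(0))}$ explicit via the five lemma rather than asserting the isomorphism of triangles directly as the paper does.
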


\begin{proof}

By proper base change applied to the cartesian diagram
\[
\begin{tikzcd}
\overline{Y^\an}\arrow[r, "i_0"]\arrow[d]&
Y^\an\arrow[d, "\pi^\an"]\\
\pt\arrow[r]&
S
\end{tikzcd}
\]
and the exact triangle of complexes over $Y^\an$

\begin{tikzcd}
\dR_{Y^\an/(S, \N, 1\mapsto T)}[-1]\arrow[r, "\cdot dlog T"]
&\dR_{Y^\an/(\pt, (0))}\arrow[r]
&\dR_{Y^\an/(S, \N, 1\mapsto T)}\arrow[r]
&{}
\end{tikzcd}

we get an isomorphism of exact triangles
\[
\begin{tikzcd}[column sep=0.5em]
\Hh^\cdot(\dR_{\overline{Y^\an}/(\pt, \N, 1\mapsto 0)})[-1]\arrow[r]\arrow[d]&
\Hh^\cdot(\dR_{\overline{Y^\an}/(\pt, (0))})\arrow[r]\arrow[d]&
\Hh^\cdot(\dR_{\overline{Y^\an}/(\pt, \N, 1\mapsto 0)})\arrow[d]\arrow[r]&{}\\
R\pi^\an_\ast\dR_{Y^\an/(S, \N, 1\mapsto T)}\otimes^\Ld_{\Of_S}\C_{\{0\}}[-1]\arrow[r]
&R\pi^\an_\ast\dR_{Y^\an/(\pt, (0))}\otimes^\Ld_{\Of_S}\C_{\{0\}}\arrow[r]
&R\pi^\an_\ast\dR_{Y^\an/(S, \N, 1\mapsto T)}\otimes^\Ld_{\Of_S}\C_{\{0\}}\arrow[r]&{}
\end{tikzcd}
\]

Thus $\Hh^{N-2}(\dR_{\overline{Y^\an}/(\pt, \N, 1\mapsto 0)})\cong \Hh^{N-2}(R\pi^\an_\ast\dR_{Y^\an/(S, \N, 1\mapsto T)}\otimes^\Ld_{\Of_S}\C_{\{0\}})$ is $N$ equivariant where the $N$ on the right hand side is given as the degree $N-2$ boundary momorphism of the lower exact triangle. Now to prove the lemma, it suffices to prove that the surjection $R^{N-2}\pi^\an_{\ast}\dR_{Y^\an/(S, \N, 1\mapsto T)}\twoheadrightarrow \Hh^{N-2}(\dR_{\overline{Y^\an}/(\pt, \N, 1\mapsto 0)})$ is $N$ equivariant, which factor as the composition of the ($N-2$)-th cohomology of the map $R\pi^\an_\ast\dR_{Y^\an/(S, \N, 1\mapsto T)}\rightarrow R\pi^\an_\ast\dR_{Y^\an/(S, \N, 1\mapsto T)}\otimes^\Ld_{\Of_S}\C_{\{0\}}$ and the inverse of the isomorphism $R\pi^\an_\ast\dR_{Y^\an/(S, \N, 1\mapsto T)}\otimes^\Ld_{\Of_S}\C_{\{0\}}\rightarrow \Hh^\cdot(\dR_{\overline{Y^\an}/(\pt, \N, 1\mapsto 0)})$ of the left or right column in the above diagram 

Now it is reduced to show the $N$ equivariance of the natural map $R^{N-2}\pi^\an_\ast\dR_{Y^\an/(S, \N, 1\mapsto T)}\rightarrow \Hh^{N-2}(R\pi^\an_\ast\dR_{Y^\an/(S, \N, 1\mapsto T)}\otimes^\Ld_{\Of_S}\C_{\{0\}})$. But this clearly follows from the commutative diagram
\[
\begin{tikzcd}[column sep=0.3em]
R\pi^\an_\ast\dR_{Y^\an/(S, \N, 1\mapsto T)}[-1]\arrow[r]\arrow[d]
&R\pi^\an_\ast\dR_{Y^\an/(S, (0))}\arrow[r]\arrow[d]
&R\pi^\an_\ast\dR_{Y^\an/(S, \N, 1\mapsto T)}\arrow[d]
\\
R\pi^\an_\ast\dR_{Y^\an/(S, \N, 1\mapsto T)}\otimes^\Ld_{\Of_S}\C_{\{0\}}[-1]\arrow[r]
&R\pi^\an_\ast\dR_{Y^\an/(\pt, (0))}\otimes^\Ld_{\Of_S}\C_{\{0\}}\arrow[r]
&R\pi^\an_\ast\dR_{Y^\an/(S, \N, 1\mapsto T)}\otimes^\Ld_{\Of_S}\C_{\{0\}}
\end{tikzcd}
\]

\end{proof}

Thus it now is reduced to show the residue $N$ at $0$ of the $H_0$-$\tau\sigma^{-1}\chi$ eigenpart of $R^{N-2}\pi^\an_\ast\dR_{Y^\an/(S, \N, 1\mapsto T)}$ is maximally nilpotent.  But by Corollary 2.2.3 of \cite{ill}, if we identify the fibre of the  $H_0$-$\tau\sigma^{-1}\chi$ eigenpart of $R^{N-2}\pi^\an_\ast\dR_{Y^\an/(S, \N, 1\mapsto T)}$ at $0$ with the fibre at $s$, here $s\neq 0$ and $(s^{de}u')^N\neq 1$, then the monodromy $T_s$ at $s$ around $0$ is identified with $\expo(-2\pi i N)$. We see from Proposition \ref{ss} and Remark \ref{link} that outside $T=0$, the family $Y^\an$ is just the projective variety $u'T^{de}(X_1^N+X_2^N+\cdots+X_N^N)=NX_1X_2\cdots X_N$ over the punctured analytic disc with coordinate $T$, where $u'$ should be viewed as its image in $\C$. As $Y^\an$ is (over the punctured unit disc) the base change of the projective variety $u'T(X_1^N+X_2^N+\cdots+X_N^N)=NX_1X_2\cdots X_N$ along $S\rightarrow S$, $T\mapsto T^{de}$, we know the monodromy $T_s$ can be identified with $\rho_s(\gamma_\infty)^{de}$ which is again maximally unipotent from the condition of our Theorem \ref{mt} since the coefficient field $\C$ is of characteristic $0$. Now the maximal nilpotence of $N$ follows from  the monodromy $T_1=\expo(-2\pi iN)$ on the eigenspace of the $H_0$ action being maximally unipotent.


\bibliographystyle{amsalpha}
\bibliography{mybibliography}

\end{document}